\newtheorem{thm}{Theorem}[section]
\newtheorem{lem}[thm]{Lemma}
\newtheorem{prop}[thm]{Proposition}
\newtheorem{cor}[thm]{Corollary}
\theoremstyle{definition}
\theoremstyle{remark}
\newtheorem{re}[thm]{Remark}
\numberwithin{equation}{section}
\theoremstyle{definition}
\newtheorem{definition}{Definition}[section]
\theoremstyle{remark}
\newcommand{\tran}{^{\mathstrut\scriptscriptstyle\top}}
\newcommand{\Rey}{\mathcal{R}e }
\newcommand{\x}{\cdot}
\newcommand{\grad}{\nabla}
\begin{document}

\title{Damping Functions correct over-dissipation of the Smagorinsky Model}


\author{Ali Pakzad}
\address{Department of Mathematics, University of Pittsburgh, Pittsburgh, PA 15260, USA}
\email{alp145@pitt.edu}

\maketitle
\setcounter{tocdepth}{1}

\begin{abstract}
This paper studies the time-averaged energy dissipation rate $\langle  \varepsilon_{SMD} (u)\rangle$ for the combination of the Smagorinsky model and damping function. The Smagorinsky model is well known to over-damp. One common correction is to include damping functions that reduce the effects of model viscosity near walls. Mathematical analysis is given here that allows evaluation of $\langle  \varepsilon_{SMD} (u)\rangle $ for any damping function. Moreover, the analysis motivates a modified van Driest damping. It is proven that the combination of the Smagorinsky with this modified damping function does not over dissipate and is also consistent with Kolmogorov phenomenology.
\end{abstract}

\section{introduction}
Experience with the Smagorinsky model (SM)  indicates it over dissipates (p.247 of Sagaut \cite{Sag}). This extra dissipation can laminarize the numerical approximation of a turbulent flow and prevent the transition to turbulence (p.192 of \cite{Laytonbook}). Model refinements aim at reducing model dissipation occur as early as 1975 \cite{Sch} and continues with dynamic parameter selection (Germano,  Piomelli,  Moin and Cabot \cite{GPMC} and Swierczewska \cite{A1}), structural sensors (Hughes, Oberai and Mazzei \cite{Hughes}) and near wall models (e.g.,  Piomelli and Balaras \cite{Pio}, John, Layton and Sahin \cite{JLS} and John and Liakos \cite{JL}). The classical approach is to multiply the turbulent viscosity with damping function $\beta(x)$ (such as van Driest damping \cite{Van}) with $\beta(x) \rightarrow 0$ as $x \rightarrow$ walls. There has been many numerical tests but little analytic support of this combination.  

 This paper analyzes this combination of the SM with damping function $\beta(x)$ in the flow domain $\Omega = (0,L)^3$,
\begin{equation}
  \label{Damping}
  u_t+u \cdot \nabla u -\nu \Delta u + \nabla p - \grad \x (\beta(x) (C_s \delta)^2 |\grad u| \grad u)=0\hspace{10pt} \mbox{and}\hspace{10pt} \grad \cdot u =0 \hspace{10pt} \mbox{in}\,\, \Omega.
   \end{equation}
 In (\ref{Damping})  $u$ is the velocity, $p$ is the pressure, $\nu$ is the kinematic viscosity, $\delta<<1$ is a model length scale and  $C_s \simeq 0.1$  is the standard model parameter (Lilly \cite{Lilly}). To evaluate the effect of damping function in the near wall region, we study the time-averaged energy dissipation rate of (\ref{Damping}) for shear flow. $L$-periodic boundary conditions in $x$ and $y$ directions are imposed. $z=0$ is a fixed wall and the wall $z=L$ moves with velocity $U$ (Figure \ref{Fig1}), 

\begin{equation} \label{BC}
u(x,y,0,t)=(0,0,0)\tran \hspace{18pt}\mbox{and}\hspace{18pt} u(x,y,L,t)=(U,0,0)\tran.
\end{equation}
The Reynolds number is $\Rey=\frac{U L}{\nu}$. The time-averaged energy dissipation rate for model (\ref{Damping})  includes dissipation due to the  viscous forces and turbulent diffusion reduced by the damping function $\beta(x)$. It is given by

\begin{equation}
\langle  \varepsilon_{SMD} (u)\rangle =  \limsup\limits_{T\rightarrow\infty} \frac{1}{T} \int_{0}^{T}  (\frac{1}{|\Omega|} \int_ \Omega \nu |\nabla u|^2 + (c_s\delta)^2 \beta(x) |\nabla u|^3 dx) \,\, dt.
\end{equation}

 This paper estimates  $\langle  \varepsilon_{SMD} (u)\rangle$ (Theorem \ref{thm1}) for a damping function $\beta(x) $ in terms of its integral on an $\gamma=\mathcal{O}(\Rey^{-1})$ strip along the moving wall as
$$ \langle  \varepsilon_{SMD} (u)\rangle  \leq  \big[C_1 +C_2 \, (\frac{C_s \delta}{L})^2  \, \Rey^3\,\frac{1}{L} \int_{L-\gamma L}^{L}  \beta(z)\,  dz \big] \, \frac{U^3}{L}.$$
For an algebraic approximation of van Driest damping (Section 2.2), Corollary 3.2 shows 
$$\langle  \varepsilon_{SMD} (u)\rangle \leq \big[C_2+C_2\, (\frac{C_s \delta}{L})^2   \, \Rey^2\big] \, \frac{U^3}{L}.$$
The above estimate goes to $\frac{U^3}{L}$ for fixed $\Rey$ as $\delta\rightarrow 0$, but blows up as $\Rey\rightarrow \infty$ for fixed $\delta$, suggesting over-dissipation. On the other hand, damping with a classical mixing length formula (3.19) of Prandtl, we obtain in Corollary 3.4  
$$\langle  \varepsilon_{SMD} (u)\rangle \leq C\,  \frac{U^3}{L},$$
which is consistent with Kolmogorov phenomenology.

\begin{figure}\label{Fig1}
\includegraphics[scale=0.3]{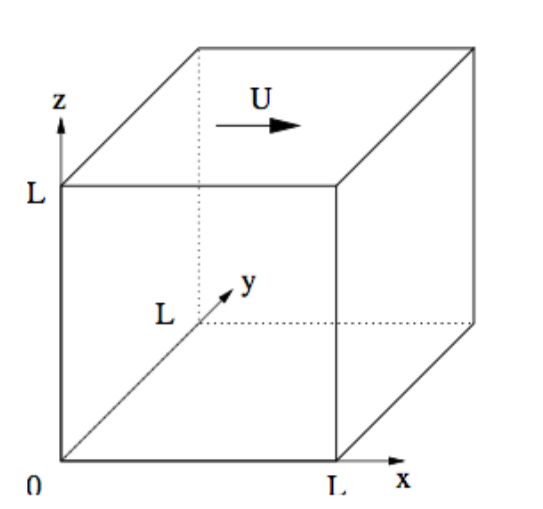}
\caption{Shear flow.}
\end{figure} 

\subsection{Related work}

 In the theory of turbulence, the time-averaged energy dissipation rate is a fundamental quantity (Sreenivasan \cite{S84}, Pope \cite{Po00}, Lesieur \cite{L97}, and Frisch \cite{F95}) determines the smallest persistent length scales and the dimension of any global attractor \cite{Layton03}. Moreover, the smallest length scale of turbulent flow simulation can be estimated by using upper bounds of the energy dissipation rate. In turbulent flows, the energy dissipation is often observed approach a limit  independent of the viscosity \cite{Kolmogorov}. No rigorous proof of this fact has been given. For shear flow between parallel plates, Busse \cite{B70} and Howard \cite{H72} estimated $\langle  \varepsilon(u) \rangle $ under the  assumptions that the flow is statistically stationary. Doering and Constantin \cite{EDR-shear} proved  an upper bound on the time-averaged energy dissipation rate for general weak solutions of the Navier-Stokes equations (NSE), $\langle  \varepsilon (u)\rangle \leq C\,  \frac{U^3}{L} $. Similar estimations have been proven by Marchiano \cite{Marchiano}, Wang \cite{Wang} and Kerswell \cite{Kerswell} in more generality.

The Smagorinsky model \cite{Smagorinsky}, $\beta(x)\equiv 1$ in (\ref{Damping}), is a common turbulence model used in Large Eddy Simulation (e.g., \cite{Layton01}, \cite{Sag}, \cite{John}, \cite{Mus}, \cite{par} and \cite{Geu}). The extra term with respect to the NSE can be generally justified as follows. In turbulence, dissipation occurs non-negligibly only at very small scales, smaller than typical mesh. The balance between energy input at the largest scales and energy dissipation at the smallest is a critical selection mechanism for determining statistics of turbulent flows. To get an accurate simulation, once a mesh is selected, an extra dissipative term must be introduced to model the effect of the unresolved fluctuations, which are smaller than the mesh width, upon the resolved velocity.

 The energy dissipation rate of the Smagorinsky model for shear flow with boundary layers was estimated in \cite{Layton03} as 
\begin{equation}
\langle  \varepsilon_s (u)\rangle \simeq [1+C_s^2 (\frac{\delta}{L})^2 (1+\Rey)^2] \frac{U^3}{L}.
\end{equation}
This estimate blows up for $\delta$ fixed as $\Rey \rightarrow \infty$, which is consistent with the numerical evidence (e.g.,  Iliescu and Fischer \cite{Fischer} and  Moin and  Kim \cite{MK}). Surprisingly, it was shown in  \cite{Layton02} that the energy dissipation rate of the Smagorinsky model  in the absence of boundary layers satisfies 

\begin{equation}
\langle  \varepsilon_s (u)\rangle \simeq  \frac{U^3}{L}.
\end{equation}
Comparing these two results (1.4) and (1.5)  suggests that  the model over dissipation is due to the action of the model viscosity in boundary layers rather than in interior small scales generated by the turbulent cascade. To reduce the effect of model viscosity in the boundary layers damping functions $\beta(x)$, which go to zero at the walls, are often used (Pope \cite{Po00}). In this case most of the tools of analysis, such as K{\"o}rn's inequality, the Poincar{\'e}-Friederichs inequality, and Sobolev's inequality, no longer hold. Thus, the mathematical development of the SM under no-slip boundary conditions with damping function is cited in \cite{Layton01} p.78 as an important open problem. 

\bigskip

\noindent{\bf Acknowledgments.} 
The author would like to thank Professor William Layton for suggesting this problem and for many fruitful discussions. A.P. was Partially supported by NSF grants DMS 1522267 and CBET 1609120. 

\section{Mathematical preliminaries}
We use the standard notations $L^p(\Omega), W^{k,p}(\Omega), H^k(\Omega)= W^{k,2}(\Omega)$ for the Lebesgue and Sobolev spaces respectively. The inner product in the space $L^2(\Omega)$ will be denoted by $(\cdot,\cdot)$ and its norm by $ || \x || $ for scalar, vector and tensor quantities. Norms in Sobolev spaces $H^k(\Omega), k>0$, are denoted by  $ || \x ||_k$ and the usual $L^p$ norm is denoted by $|| \x ||_{L^p}$. The symbols $C$ and $C_i$ for $i=1, 2, 3$ stand for generic positive constant independent of the $\nu$, $L$ and $U$. $\grad u$ is the gradient tensor $(\grad u)_{ij} =\frac{\partial u_j}{\partial x_i}$ for $i, j= 1, 2 , 3$.

\begin{definition}

The velocity at a given time $t$ is sought in the space

$\mathbb{X}(\Omega):= \{u \in H^1(\Omega):u(x,y,0)=(0,0,0)\tran, \, u(x,y,L)=(U,0,0)\tran, \, \mbox{$u$ is $L$-periodic in $x$ and $y$ direction} \}.$

The test function space is 

$\mathbb{X}_0(\Omega):= \{u \in H^1(\Omega):u(x,y,0)=(0,0,0)\tran,\, u(x,y,L)=(0,0,0)\tran,\, \mbox{$u$ is $L$-periodic in $x$ and $y$ direction} \}.$

The pressure at time $t$ is sought in 

${Q}(\Omega):= L_0^2(\Omega) = \{q \in L^2(\Omega): \hspace{3pt}\int_{\Omega} q dx= 0\}.$

And the space of divergence-free functions is denoted by

$V(\Omega):=\{ u \in \mathbb{X}(\Omega):  \hspace{3pt}(\grad \x u,q)=0  \hspace{5pt} \forall  q \in {Q}\}.$
\end{definition}

\begin{definition} 
\textbf{(Trilinear from)} Define  $b: \mathbb{X}\times \mathbb{X} \times \mathbb{X} \rightarrow \mathbb{R}$ as $b(u,v,w):= (u \x \grad v, w) $.
\end{definition}

\begin{lem} \label{trilinear}
The nonlinear term  $b(\x,\x,\x)$ is continuous on $\mathbb{X}\times \mathbb{X} \times \mathbb{X}$ (and thus on $V \times V \times V$ as well). Moreover,  we have the following skew-symmetry property for $b$ 
$$b(u,v,v)=0 \hspace{10pt} \forall u\in V , v \in \mathbb{X}.$$
\end{lem}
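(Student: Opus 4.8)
The plan is to establish the two assertions separately: first the continuity estimate $|b(u,v,w)| \le C\,||u||_1\,||v||_1\,||w||_1$, and then the identity $b(u,v,v)=0$ for $u\in V$, $v\in\mathbb{X}$, which amounts to a careful integration by parts that exploits both $\grad\x u=0$ and the structure of the boundary conditions.

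For continuity, I would start from the pointwise bound $|b(u,v,w)| \le \int_\Omega |u|\,|\grad v|\,|w|\,dx$ and apply the generalized Hölder inequality with the exponents $\tfrac14+\tfrac12+\tfrac14=1$, giving
$$|b(u,v,w)| \le ||u||_{L^4}\,||\grad v||_{L^2}\,||w||_{L^4}.$$
Since $\Omega=(0,L)^3\subset\mathbb{R}^3$, the Sobolev embedding $H^1(\Omega)\hookrightarrow L^6(\Omega)\hookrightarrow L^4(\Omega)$ (the last inclusion using boundedness of $\Omega$) yields $||u||_{L^4}\le C\,||u||_1$ and $||w||_{L^4}\le C\,||w||_1$, while $||\grad v||_{L^2}\le ||v||_1$. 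Combining these gives the desired bound, and continuity on $V\times V\times V$ follows at once since $V\subset\mathbb{X}$.

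For the skew-symmetry, I would use the componentwise identity $u_i\,\partial_i v_j\,v_j=\tfrac12\,u_i\,\partial_i(|v|^2)=\tfrac12\,u\x\grad(|v|^2)$, so that
$$b(u,v,v)=\frac12\int_\Omega u\x\grad(|v|^2)\,dx.$$
Integrating by parts produces $-\tfrac12\int_\Omega(\grad\x u)\,|v|^2\,dx+\tfrac12\int_{\partial\Omega}(u\x n)\,|v|^2\,dS$. The volume term vanishes because $u\in V$ is divergence-free. The treatment of the boundary term is where care is needed: one must check that it vanishes despite $v$ \emph{not} being zero on the moving wall $z=L$.

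On the two walls $z=0$ and $z=L$ the outward normal is $\pm e_3$, so $u\x n=\pm u_3$; the boundary conditions (\ref{BC}) force the third component $u_3$ to vanish on both walls (it is $0$ at $z=0$ and $0$ at $z=L$), hence these contributions are zero regardless of the value of $|v|^2$. On the four lateral faces the $L$-periodicity of $u$ and of $|v|^2$ makes the integrands on opposite faces coincide while the outward normals are antiparallel, so these contributions cancel pairwise. Thus the entire boundary integral vanishes and $b(u,v,v)=0$. The main obstacle is precisely this boundary analysis: the non-homogeneous condition at $z=L$ means the argument cannot appeal to $v|_{\partial\Omega}=0$, and the cancellation must instead be traced to the vanishing normal component $u_3$ of the divergence-free first argument together with periodicity.
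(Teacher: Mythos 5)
Your proof is correct, and it is the standard argument; but note that the paper itself gives no argument at all here --- it simply cites Girault--Raviart for the case of \emph{zero} boundary conditions and calls the rest standard. What you supply is precisely the part that citation does not cover: the boundary integral $\tfrac12\int_{\partial\Omega}(u\x n)\,|v|^2\,dS$ does not vanish because $v|_{\partial\Omega}=0$ (it is not, on the moving wall $z=L$), but because the normal component $u_3$ of the divergence-free first argument vanishes on both horizontal walls under the boundary conditions (\ref{BC}), while the lateral contributions cancel by $L$-periodicity. That is exactly the right reason, and your continuity estimate via H\"older with exponents $4,2,4$ and the embedding $H^1(\Omega)\hookrightarrow L^4(\Omega)$ is the standard one. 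Two small points you could make explicit: (i) the integration by parts and the pointwise identity $u\x\grad(|v|^2)=2\,u_i\partial_i v_j\,v_j$ should first be carried out for smooth fields and then extended by density, using that $|v|^2\in L^3(\Omega)$ and its trace lies in a suitable $L^p(\partial\Omega)$ so all terms are integrable; and (ii) membership in $V$ as defined in the paper only says $\grad\x u$ is orthogonal to all mean-zero $L^2$ functions, hence constant; that this constant is zero follows from the divergence theorem together with the same boundary observations ($u_3=0$ on the walls, periodicity laterally), so $u$ is genuinely divergence-free as you assume.
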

\begin{proof}
The proof is standard and the one with zero boundary conditions can be found in  p.114 of   Girault and Raviart \cite{Raviart}.
\end{proof}

\subsection{Construction of background flow} 
  
One key step to the upper bound on $\langle  \varepsilon_{SMD} (u)\rangle $ is to construct an appropriate background flow, $\Phi \in \mathbb{X}(\Omega)$, following Hopf \cite{Hopf} and Doering and Constantin \cite{EDR-shear}. This is a divergence-free function extending the boundary condition (\ref{BC}) to the interior of $\Omega$. Moreover, $u-\Phi \in \mathbb{X}_0(\Omega)$ and will be used as a test function in the weak form (\ref{Damping-weak}). The choice of $\gamma \in (0,1)$ will be determined by the needs of the estimates in (\ref{choiceofgamma}) and it will be chosen to be $\gamma = \frac{1}{5.1}\, (\Rey)^{-1}$.

\begin{definition}\label{backgroundflow}
 \textbf{(The background flow)} Define $\Phi(x,y,z) := (\phi(z),0,0)\tran$, where
 
$$
\phi(z) =
\left\{
	\begin{array}{ll}
		0  & \mbox{if } z\in [0,L-\gamma L]  \\
		\frac{U}{\gamma L} (z-(L-\gamma L))  & \mbox{if }  z\in [L-\gamma L, L]
	\end{array}.
\right.
$$
\end{definition}
$\phi(z)$ is sketched in Figure 2. We collect two properties for $\Phi$ in Lemmas \ref{lemma1} and \ref{lemma2}.

\begin{lem} \label{lemma1}
 $\Phi$ satisfies
  
  \begin{tasks}(4)
\task ${\Vert\Phi\Vert}_{\infty} \leq U,$
\task ${\Vert\nabla\Phi\Vert}_{\infty} \leq \frac{U}{\gamma L},$
\task ${\Vert \Phi\Vert}^2  \leq\frac{U^2 \gamma L^3}{3},$
\task $ \Vert \nabla \Phi\Vert^ 2 \leq \frac{U^2 L}{\gamma}.$
\end{tasks}
 \end{lem}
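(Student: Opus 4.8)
The plan is to exploit the fact that $\Phi=(\phi(z),0,0)\tran$ depends on the single variable $z$, so that all four estimates collapse to elementary one-dimensional computations. Pointwise one has $|\Phi(x,y,z)|=|\phi(z)|$, and under the convention $(\nabla u)_{ij}=\partial u_j/\partial x_i$ the gradient tensor $\nabla\Phi$ has a single nonzero entry, namely $(\nabla\Phi)_{31}=\partial\Phi_1/\partial z=\phi'(z)$, so that $|\nabla\Phi|=|\phi'(z)|$ as well. With these two pointwise identities the whole lemma reduces to evaluating $\phi$ and $\phi'$.

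For (a) and (b) I would simply maximize over $[0,L]$. On the flat part $[0,L-\gamma L]$ both $\phi$ and $\phi'$ vanish, while on the ramp $[L-\gamma L,L]$ the function $\phi$ is linear and increasing, so $\|\Phi\|_\infty=\phi(L)=\frac{U}{\gamma L}\,(\gamma L)=U$ and $\|\nabla\Phi\|_\infty=|\phi'|=\frac{U}{\gamma L}$. This already gives (a) and (b), in fact with equality.

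For (c) and (d) I would pass to the $L^2(\Omega)$ integral and again use that $\phi$ depends only on $z$: integrating first in $x$ and $y$ over $(0,L)^2$ produces a factor $L^2$, leaving a one-dimensional integral supported on the strip $[L-\gamma L,L]$. After the substitution $s=z-(L-\gamma L)$, which maps the strip onto $[0,\gamma L]$, the computations are
\begin{align*}
\|\Phi\|^2 &= L^2\int_{L-\gamma L}^{L}\phi(z)^2\,dz = L^2\Big(\frac{U}{\gamma L}\Big)^2\int_0^{\gamma L}s^2\,ds=\frac{U^2\gamma L^3}{3},\\
\|\nabla\Phi\|^2 &= L^2\int_{L-\gamma L}^{L}\phi'(z)^2\,dz = L^2\Big(\frac{U}{\gamma L}\Big)^2(\gamma L)=\frac{U^2 L}{\gamma},
\end{align*}
which are precisely (c) and (d), once more with equality.

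There is no genuine obstacle here: the lemma is a direct computation, and the only thing to be careful about is bookkeeping — correctly reading off that only the $(3,1)$ entry of $\nabla\Phi$ survives, retaining the factor $L^2$ coming from the periodic $x$ and $y$ directions, and tracking the slope $U/(\gamma L)$ through the change of variables. It is worth recording the sharp (equality) values rather than mere inequalities, since these four quantities are exactly the inputs needed later to control the trilinear, viscous, and model terms when $u-\Phi$ is used as a test function in the weak formulation.
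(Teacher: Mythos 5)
Your proof is correct and follows essentially the same route as the paper, which likewise treats all four bounds as immediate consequences of the definition and carries out exactly your computation for part (c) as its representative example. The only difference is that you spell out all four calculations and note that the bounds are in fact equalities, which is a harmless (and slightly more informative) elaboration.
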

  
    \begin{proof}
  
 They all are the immediate consequence of the Definition \ref{backgroundflow}. We  show ($c$) here as an example.\\
 
   ${\Vert \Phi\Vert}^2 = L^2 \int_{0}^{L} |\phi (z)| ^2  dz = L^2 \int_{L-\gamma L}^{L} \frac{U^2}{(\gamma L)^ 2} (z-(L-\gamma L))^2 dz =\frac{U^2 \gamma L^3}{3}.$\\
  \end{proof}

  \begin{figure}
\includegraphics[scale=0.3]{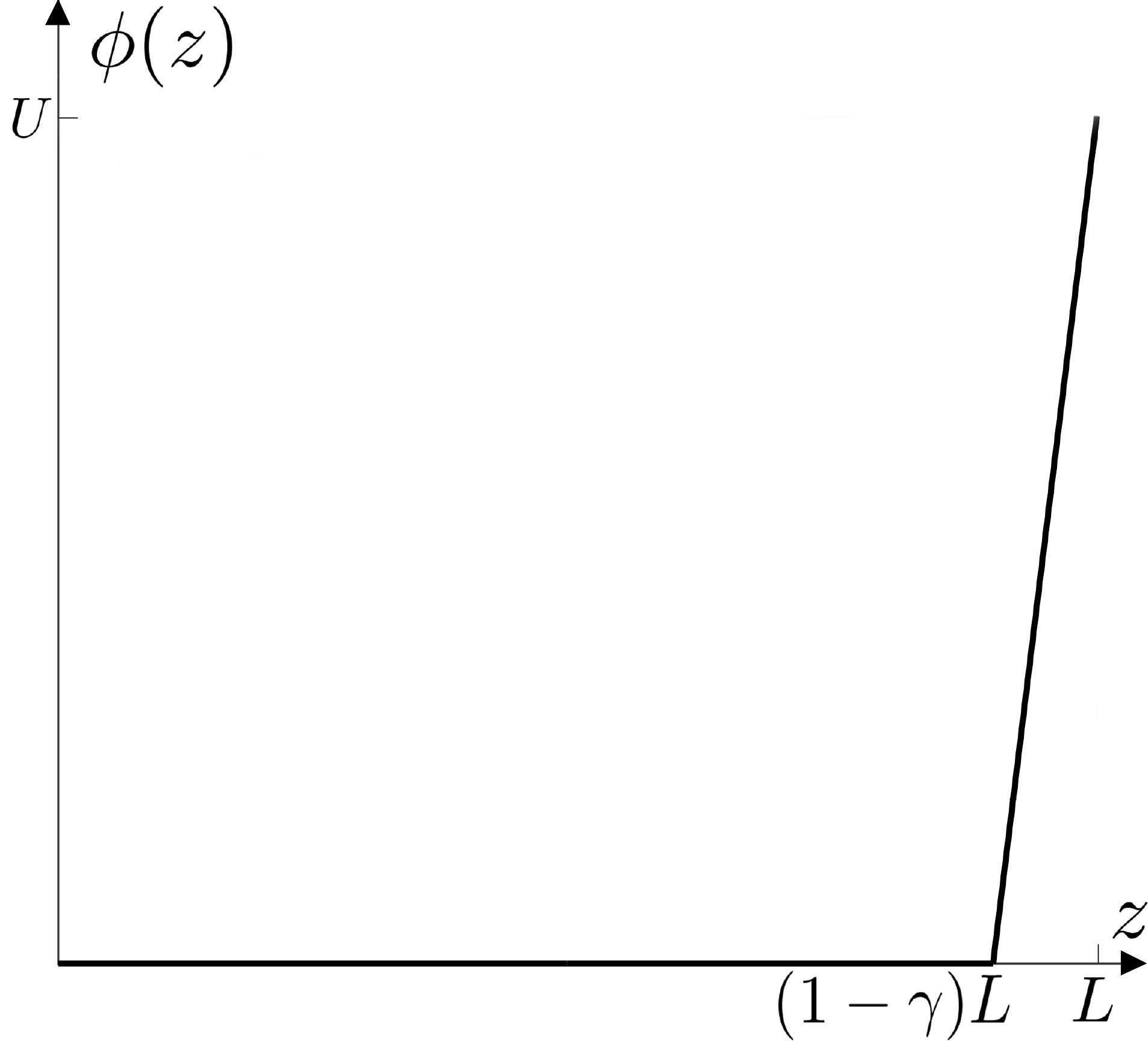}
\caption{The background flow.}
\end{figure}

We will need the well-known dependence of the Poincar\'e -Friedrichs inequality constant on the domain. A straightforward argument in the thin domain $\mathcal{O}_{\gamma L}$ implies Lemma \ref{lemma2}.

 \begin{lem} \label{lemma2}
 Let $\mathcal{O}_{\gamma L}=\lbrace(x,y,z)\in \Omega : L-\gamma L \leq z \leq L\rbrace  $ 
  be the region close to the upper boundary. Then we have
  \begin{equation}
   {\Vert u - \Phi\Vert}_{L^2(\mathcal{O}_{\gamma L})} \leq \gamma L  {\Vert\nabla (u -\Phi)\Vert }_{L^2(\mathcal{O}_{\gamma L})}.
    \end{equation}
  \end{lem}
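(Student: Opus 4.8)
The plan is to exploit the thinness of $\mathcal{O}_{\gamma L}$ in the $z$-direction. The strip has height $\gamma L$, and the function $w := u - \Phi$ lies in $\mathbb{X}_0(\Omega)$, so by (\ref{BC}) and Definition \ref{backgroundflow} it vanishes on the moving wall, $w(x,y,L) = (0,0,0)\tran$. Because only the boundary value at the \emph{top} of the strip is needed, the estimate reduces to the one-dimensional Poincar\'e--Friedrichs inequality on an interval of length $\gamma L$ with one zero endpoint, tensored over the periodic $(x,y)$ variables; the explicit constant $\gamma L$ is simply the length of that interval.

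First I would fix $(x,y)$ and work in $z$ alone. By Fubini, $w(x,y,\cdot) \in H^1(L-\gamma L, L)$ for a.e.\ $(x,y)$, hence is absolutely continuous in $z$, and the fundamental theorem of calculus together with the vanishing trace $w(x,y,L)=0$ gives
\[
w(x,y,z) = -\int_z^L \partial_s w(x,y,s)\, ds, \qquad z \in [L-\gamma L, L].
\]
Applying the Cauchy--Schwarz inequality and using $L - z \leq \gamma L$ on the strip yields the pointwise bound
\[
|w(x,y,z)|^2 \leq (L-z)\int_{L-\gamma L}^{L} |\partial_s w(x,y,s)|^2\, ds \leq \gamma L \int_{L-\gamma L}^{L} |\partial_s w|^2\, ds.
\]

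Next I would integrate this inequality in $z$ over $[L-\gamma L, L]$, which produces a second factor of $\gamma L$ from the length of the interval, and then integrate in $(x,y)$ over the periodic square. This gives
\[
\int_{\mathcal{O}_{\gamma L}} |w|^2\, dx \;\leq\; (\gamma L)^2 \int_{\mathcal{O}_{\gamma L}} |\partial_z w|^2\, dx \;\leq\; (\gamma L)^2 \int_{\mathcal{O}_{\gamma L}} |\nabla w|^2\, dx,
\]
where the final step merely discards the nonnegative tangential-derivative contributions via $|\partial_z w|^2 \leq |\nabla w|^2$. Taking square roots is exactly the assertion of the lemma.

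The computation is elementary, and both factors of $\gamma L$ arise directly from the height of the strip, which is the whole point: the constant shrinks as the strip is made thin (recall $\gamma = \mathcal{O}(\Rey^{-1})$), rather than scaling with the diameter of $\Omega$. The only point requiring care is the rigorous justification of the pointwise representation for $H^1$ functions and the vanishing of the trace at $z=L$; I would handle this by first establishing the identity for smooth functions and then passing to the limit using the density of smooth fields in $\mathbb{X}_0(\Omega)$. I do not anticipate a genuine obstacle here, since the tracking of the domain-dependent constant is the substance of the result and is transparent in the estimate above.
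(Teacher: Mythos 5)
Your proposal is correct and follows essentially the same route as the paper's proof: a one-dimensional fundamental-theorem-of-calculus representation from the vanishing trace at $z=L$, Cauchy--Schwarz to pick up one factor of $\gamma L$, integration over the strip for the second factor, and a density argument to pass from smooth fields to $u-\Phi$. No substantive differences to report.
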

 \begin{proof}
 
 First  let $v $ be a $C^1$ function on $ \mathcal{O}_{\gamma L}$ that vanishes for $z=L$.  Then component-wise $ (i=1,2,3)$, we have
  $$ v_{i}(x,y,z) = v_{i} (x,y,L) -\int_{z}^{L} \frac{d   v_{i}}{d\xi} (x,y,\xi) d\xi.$$
  Observing that $v_{i}(x,y,L) = 0 $, squaring both sides, and using the Cauchy-Schwarz inequality, we get 
  $$ v^2_{i}(x,y,z)\leq \gamma L \int _{L-\gamma L}^{L} (\frac{d v_i}{d\xi}(x,y,\xi))^2 d\xi.$$
 Integrating both sides with respect to $z$ gives
  $$\int_{L-\gamma L}^{L} v^2_{i}(x,y,z) dz \leq (\gamma L)^2 \int _{L-\gamma L}^{L} (\frac{d v_i}{d\xi}(x,y,\xi))^2 d\xi.$$  
Then integrating with respect to $x $ and $ y$ and summing from $i=1$ to 3, we obtain
  $$\Vert v\Vert^2_ {L^2(\mathcal{O}_{\gamma L})}\leq (\gamma L)^2 \Vert\nabla v\Vert^2_{L^2(\mathcal{O}_{\gamma L})}.$$
This proves the lemma for  $ v \in C^1 $. Finally use a density argument and take $v = u - \Phi $.
 \end{proof}
 \subsection{The kinetic energy}
 Before proving the main theorem, we prove boundedness of the kinetic energy, $\frac{1}{2} ||u||^2$, and that $\langle  \varepsilon_{SMD} (u)\rangle$ is well-defined. The proof of the model (\ref{Damping}) is similar to the NSE case first presented  in Hopf \cite{Hopf}. We need the following proposition first.
 \begin{prop}\label{prop1}
Let $\grad v \in L^p(\mathcal{O}_{\gamma L})$ and $0<p<\infty$. If $v(x,y,L,t)=0$ then 
$$||\frac{v(x,y,z)}{L-z}||_{L^p(\mathcal{O}_{\gamma L})}\leq \frac{p}{p-1}\, ||\frac{\partial v}{\partial z} (x,y,z)||_{{L^p(\mathcal{O}_{\gamma L})}}.$$
\end{prop}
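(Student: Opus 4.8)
The plan is to recognize the claimed bound as a Hardy-type inequality and to reduce it, via Fubini's theorem, to a one-dimensional inequality on the interval $[L-\gamma L, L]$ holding for almost every fixed $(x,y)$ in the periodic cross-section. First I would fix $(x,y)$ and regard $g(z) := v(x,y,z)$ as a function on $[L-\gamma L, L]$ with $g(L)=0$. Raising the desired estimate to the $p$-th power, it suffices to prove the slice inequality
$$\int_{L-\gamma L}^{L} \frac{|g(z)|^p}{(L-z)^p}\, dz \leq \Big(\frac{p}{p-1}\Big)^p \int_{L-\gamma L}^{L} |g'(z)|^p\, dz,$$
since $g'(z) = \frac{\partial v}{\partial z}(x,y,z)$; integrating this over $(x,y)$ and taking $p$-th roots then yields the stated $L^p(\mathcal{O}_{\gamma L})$ bound. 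Note that the constant $\frac{p}{p-1}$ is positive only for $p>1$, which is the regime of interest.

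Next I would change variables by setting $s = L-z$ and $h(s) := g(L-s)$, so that $h(0)=0$, $h'(s) = -g'(L-s)$, and the slice inequality becomes the classical Hardy inequality on $(0,\gamma L)$,
$$\int_0^{\gamma L} \frac{|h(s)|^p}{s^p}\, ds \leq \Big(\frac{p}{p-1}\Big)^p \int_0^{\gamma L} |h'(s)|^p\, ds.$$
The core computation is an integration by parts followed by H\"older's inequality. Using the antiderivative $\int s^{-p}\,ds = -\frac{1}{p-1}s^{1-p}$ and differentiating $|h|^p$, I would write
$$\int_0^{\gamma L} \frac{|h|^p}{s^p}\, ds = \Big[-\frac{s^{1-p}}{p-1}|h(s)|^p\Big]_0^{\gamma L} + \frac{p}{p-1}\int_0^{\gamma L} \frac{|h|^{p-1}}{s^{p-1}}\,\mathrm{sgn}(h)\,h'\, ds.$$
The boundary term at $s=\gamma L$ has sign $\leq 0$ (as $p>1$) and may be dropped, while the term at $s=0$ vanishes because $|h(s)|^p \leq s^{p-1}\int_0^s |h'|^p\,dt$ by H\"older, so $s^{1-p}|h(s)|^p \leq \int_0^s |h'|^p \to 0$. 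Applying H\"older with conjugate exponents $\frac{p}{p-1}$ and $p$ to the remaining integral and abbreviating $I := \int_0^{\gamma L}|h|^p s^{-p}\,ds$ and $J := \int_0^{\gamma L}|h'|^p\,ds$, I obtain $I \leq \frac{p}{p-1}\, I^{(p-1)/p} J^{1/p}$, and cancelling $I^{(p-1)/p}$ gives $I \leq \big(\frac{p}{p-1}\big)^p J$.

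The main obstacle is the rigorous justification of the two delicate points concealed in this formal computation: that $I$ is finite, so the cancellation $I^{1/p}\leq \frac{p}{p-1}J^{1/p}$ is legitimate, and that the endpoint contribution at $s=0$ genuinely vanishes. I would circumvent both by first proving the inequality with the lower limit replaced by $\varepsilon>0$, where every quantity is finite and the endpoint term at $\varepsilon$ carries the favorable sign, and then letting $\varepsilon\to 0$ by monotone convergence.

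Finally, exactly as in the proof of Lemma \ref{lemma2}, I would first establish the estimate for $v\in C^1(\overline{\mathcal{O}_{\gamma L}})$ vanishing on $z=L$, and then extend it to every $v$ with $\nabla v\in L^p(\mathcal{O}_{\gamma L})$ satisfying $v(x,y,L,t)=0$ by a density argument, which also confirms that the slicing and the application of Fubini's theorem are valid.
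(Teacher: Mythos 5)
Your proposal is correct and follows essentially the same route as the paper: reduce to a one-dimensional Hardy inequality on each vertical slice for fixed $(x,y)$, then integrate over the cross-section via Fubini (with a density argument in the background). The only difference is that the paper simply cites the one-dimensional Hardy inequality from Brezis (p.~313), whereas you prove it from scratch by integration by parts, H\"older's inequality, and a truncation at $\varepsilon$ to justify the vanishing endpoint term and the cancellation of $I^{(p-1)/p}$ --- a sound, self-contained substitute for the citation, and your remark that the constant $\frac{p}{p-1}$ forces $p>1$ (despite the statement's ``$0<p<\infty$'') is a legitimate observation about the hypothesis as written.
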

\begin{proof}
Using B. Hardy's inequality (P.313 of Brezis \cite{Brezis}) when $z \in [L-\gamma L, L]$ for fixed $x$ and $y$ gives 

$$||\frac{v(x,y,z)}{L-z}||_{L^p([L-\gamma L, L])}\leq \frac{p}{p-1} ||\frac{\partial v}{\partial z} (x,y,z)||_{{L^p([L-\gamma L, L])}}.$$
Raising  both sides to power $p$, then taking a double integral with respect to $x$ and $y$ for $x, y \in [0, L]$ implies the result,

$$||\frac{v(x,y,z)}{L-z}||^p_{L^p(\mathcal{O}_{\gamma L})}\leq (\frac{p}{p-1})^p \, ||\frac{\partial v}{\partial z} (x,y,z)||^p_{{L^p(\mathcal{O}_{\gamma L})}} \leq (\frac{p}{p-1})^p \,||\grad v||^p_{{L^p(\mathcal{O}_{\gamma L})}}.$$ 
\end{proof}

\begin{lem}\label{Lemmma0}
   The kinetic energy and the time averages of the energy dissipation of the solution to (\ref{Damping}) with the boundary conditions (\ref{BC}) are  uniformly bounded in time, i.e.
$$\sup_{t \in (0,\infty)} ||u(t)|| \leq C < \infty\hspace{15pt} \mbox{and}\hspace{15pt} \sup_{t \in (0,\infty)} \frac{1}{T} \int_{0}^{T}  (\frac{1}{|\Omega|} \int_ \Omega \nu |\nabla u|^2 + (c_s\delta)^2 \beta(x) |\nabla u|^3 dx) \,\, dt \leq C < \infty.$$
\end{lem}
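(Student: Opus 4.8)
The plan is to derive both bounds from a single a priori energy estimate, in the spirit of Leray and Hopf. I would test the weak form of (\ref{Damping}) with the divergence-free function $w:=u-\Phi\in\mathbb{X}_0(\Omega)$, $\Phi$ being the background flow of Definition \ref{backgroundflow}. Because $\Phi$ is independent of $t$ we have $(u_t,w)=\tfrac12\tfrac{d}{dt}\Vert w\Vert^2$; the pressure integrates away since $\grad\x w=0$; and Lemma \ref{trilinear} gives $b(u,u,w)=-b(u,u,\Phi)$. Rearranging the remaining terms yields
\[ \tfrac12\tfrac{d}{dt}\Vert w\Vert^2+\nu\Vert\grad u\Vert^2+(C_s\delta)^2\!\int_\Omega\beta|\grad u|^3\,dx = b(u,u,\Phi)+\nu(\grad u,\grad\Phi)+(C_s\delta)^2\!\int_\Omega\beta\,|\grad u|\,\grad u:\grad\Phi\,dx. \]
As this is formal, I would run it on a Galerkin basis and pass to the limit, so that the bounds hold for the weak solution satisfying the energy inequality.

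The point is that $\Phi$ and $\grad\Phi$ vanish off the thin strip $\mathcal{O}_{\gamma L}$, so each term on the right lives there and must be absorbed by the two dissipation terms on the left. Using skew-symmetry once more, $b(u,u,\Phi)=-(u\x\grad\Phi,u)$; since $\Phi=(\phi(z),0,0)\tran$ and $u_3=w_3$, this collapses to $-\int w_3\,\phi'(z)\,(\phi+w_1)\,dx$, and the $\int w_3\phi'\phi\,dx$ piece vanishes after one integration by parts in $z$ (the wall contributions drop because $w_3|_{z=L}=0$ and $\phi|_{z=L-\gamma L}=0$) together with $L$-periodicity and $\grad\x w=0$. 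The surviving term is controlled by Cauchy--Schwarz and the thin-strip estimates available here, namely the Poincar\'e inequality of Lemma \ref{lemma2} and, equivalently, the Hardy-type inequality of Proposition \ref{prop1} with $p=2$ (both $w_1$ and $w_3$ vanish at $z=L$), producing a constant times $U\gamma L\,\Vert\grad w\Vert^2_{L^2(\mathcal{O}_{\gamma L})}$. The viscous cross term is handled by Cauchy--Schwarz and Young with Lemma \ref{lemma1}(d), and the cubic cross term by Young's inequality $\tfrac{U}{\gamma L}\beta|\grad u|^2\le\tfrac12\beta|\grad u|^3+C\beta(\tfrac{U}{\gamma L})^3$, whose remainder integrates to the finite weight $(\tfrac{U}{\gamma L})^3\!\int_{\mathcal{O}_{\gamma L}}\beta\,dx$.

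Choosing $\gamma=\mathcal{O}(\Rey^{-1})$ (the value $\gamma=\tfrac{1}{5.1}\Rey^{-1}$ fixed in the construction suffices) makes each absorption coefficient a controlled fraction of $\nu$, respectively of $(C_s\delta)^2\!\int\beta|\grad u|^3$, and after subtracting I reach
\[ \tfrac12\tfrac{d}{dt}\Vert w\Vert^2+\tfrac{\nu}{2}\Vert\grad u\Vert^2+\tfrac12(C_s\delta)^2\!\int_\Omega\beta|\grad u|^3\,dx\le K, \]
with $K$ depending only on $\nu,U,L,\gamma,\delta,C_s$ and $\int_{\mathcal{O}_{\gamma L}}\beta\,dx$, but not on $t$. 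For the kinetic energy I discard the nonnegative Smagorinsky term, use $\Vert\grad u\Vert^2\ge\tfrac12\Vert\grad w\Vert^2-\Vert\grad\Phi\Vert^2$, and apply the Poincar\'e--Friedrichs inequality to $w\in\mathbb{X}_0$ (legitimate here, as $w$ vanishes on both walls) to obtain $\tfrac{d}{dt}\Vert w\Vert^2+\alpha\Vert w\Vert^2\le K'$; Gronwall gives $\sup_t\Vert w(t)\Vert<\infty$, whence $\Vert u\Vert\le\Vert w\Vert+\Vert\Phi\Vert$ is bounded. For the dissipation I integrate the displayed inequality over $(0,T)$, drop the nonnegative $\Vert w(T)\Vert^2$, divide by $T$ and by $|\Omega|$, and take the supremum over $T$.

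The step I expect to be hardest is the joint absorption of the cubic Smagorinsky cross term and the inertial term on the degenerate strip: the factor $U/(\gamma L)$ coming from $\grad\Phi$ is large, and only the thin-strip constant $\gamma L$ of Lemma \ref{lemma2} (or Proposition \ref{prop1}) together with $\gamma=\mathcal{O}(\Rey^{-1})$ balances it, so the bookkeeping of the powers of $\gamma$ is where care is required. I emphasize that the degeneracy of $\beta$ at the walls does \emph{not} obstruct the argument: I never invoke coercivity of the Smagorinsky term---it enters only through its sign and through the finite weight $\int_{\mathcal{O}_{\gamma L}}\beta\,dx$---while all coercivity is supplied by $\nu\Vert\grad w\Vert^2$. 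Finally, the formal test-function computation would be justified exactly as in Hopf's treatment of the Navier--Stokes equations, by establishing the bounds uniformly along a Galerkin approximation and passing to the limit.
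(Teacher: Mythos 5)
Your proof is correct and follows essentially the same route as the paper: Hopf's background flow $\Phi$, testing with $u-\Phi$, absorption of the strip-supported cross terms via the thin-strip Poincar\'e/Hardy inequality and Young's inequality with $\gamma=\mathcal{O}(\Rey^{-1})$, then Poincar\'e--Friedrichs and Gr\"onwall. The only substantive difference is that you retain half of the cubic Smagorinsky term on the left-hand side (the paper absorbs all of it and appeals to ``standard arguments'' for the dissipation average), which makes the second assertion of the lemma follow a little more directly.
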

\begin{proof}
The strategy is to subtract off the inhomogeneous boundary conditions  (\ref{BC}).  Consider  $v= u-\Phi$, then $v$ satisfies homogeneous boundary conditions. Substituting $u=v+\Phi$ in the equation (\ref{Damping}) yields 
\begin{equation}
  v_t+v \cdot \nabla v -\nu \Delta v + \nabla p+\phi(z) \frac{\partial v}{\partial x} + v_3 \phi'(z) (1,0,0)- \grad \x (\beta(x) (C_s \delta)^2 |\grad (v+\Phi)| \grad (v+\Phi))=0,
   \end{equation}
   $$ \grad \cdot v =0.$$
with boundary conditions
\begin{equation} 
v(x,y,0,t)=(0,0,0)\tran \hspace{18pt}\mbox{and}\hspace{18pt} v(x,y,L,t)=(0,0,0)\tran, 
\end{equation}
and 
\begin{equation}
v(x+L,y,z,t)=v(x,y,z,t)\hspace{18pt}\mbox{and} \hspace{18pt}v(x,y+L,z,t)=v(x,y,z,t).
\end{equation}

Taking inner product with $v=(v_1,v_2,v_3)$ and integrating over $\Omega$ give 
\begin{equation}
\frac{1}{2} \frac{\partial}{\partial t} ||v||^2 + \nu ||\grad v||^2 + \int_{\Omega} \big(\phi(z) \frac{\partial v}{\partial x}\x v + v_1 v_3 \phi'(z)\big) \, dx + (\beta(x) (C_s \delta)^2 |\grad (v+\Phi)| \grad (v+\Phi), \grad v)=0.
\end{equation}
Since any integral containing $\phi$ and $\phi'$ will be zero outside the strip $\mathcal{O}_{\gamma L}$, by integrating by part we have

$$\int_{\Omega} \phi(z) \frac{\partial v}{\partial x}\x v \,dx = \frac{1}{2} \int_{\mathcal{O}_{\gamma L}} \phi(z) \frac{\partial}{\partial x} |v|^2\, dx= \frac{1}{2} \int_{\mathcal{O}_{\gamma L}} \frac{\partial}{\partial x}(\phi(z) |v|^2) \,dx -\frac{1}{2} \int_{\mathcal{O}_{\gamma L}}\phi'(z) |v|^2 \,dx = -\frac{1}{2} \frac{U}{\gamma L}  \int_{\mathcal{O}_{\gamma L}} |v|^2 \,dx.$$
Inserting this identity in (2.5) and using the triangle inequality on the last term give

\begin{equation}
\begin{split}
 &\frac{\partial}{\partial t} ||v||^2 + 2 \nu ||\grad v||^2 +\frac{U}{\gamma L}  \int_{\mathcal{O}_{\gamma L}}\big(2 v_1 v_3 -  |v|^2 \big)\,dx + 2\int_{\Omega} \beta(x) (C_s \delta)^2 |\grad v|^3 \,dx\\
 &\leq  4\int_{\Omega}(\beta(x) (C_s \delta)^2|\grad v|^2\, |\grad \Phi|\,dx +2\int_{\Omega}(\beta(x) (C_s \delta)^2 |\grad v|\, |\grad \Phi|^2 \, dx.
 \end{split}
\end{equation}

The rest of analysis requires to approximate various term in the above. Let $p=2$ in Proposition \ref{prop1} and the two terms on the LHS can be bounded above as 

\begin{equation}
\begin{split}
\frac{U}{\gamma L} \int_{\mathcal{O}_{\gamma L}} |v|^2 \,dx &= \frac{U}{\gamma L} \int_{\mathcal{O}_{\gamma L}} d(z)^2 |\frac{v}{d(z)}|^2 \,dx \leq \frac{U}{\gamma L}(\gamma L)^2 \int_{\mathcal{O}_{\gamma L}}  |\frac{v}{d(z)}|^2 \,dx\\
&\leq 2 U \gamma L \int_{\mathcal{O}_{\gamma L}} |\grad v|^2\, dx \leq 2U\gamma L ||\grad v||^2.
\end{split}
\end{equation}
Similarly 
\begin{equation}
\frac{U}{\gamma L} \int_{\mathcal{O}_{\gamma L}}2 |v_1 v_3| \,dx\leq 4U\gamma L ||\grad v||^2.
\end{equation}
To bound the two terms on the RHS of (2.6), use H{\"o}lder's inequality and Young inequality $||fg||_{L^1}\leq ||f||_{L^p}\,||g||_{L^q}\leq\frac{\epsilon}{p}|f||_{L^p}^p+\frac{\epsilon^{-\frac{q}{p}}}{q}||g||_{L^q}^q $. Consider the first term, for  $p=\frac{3}{2}$, $q=3$ and $\epsilon=0.6$ we have 
\begin{equation}
\begin{split}
4\int_{\Omega}(\beta(x) (C_s \delta)^2|\grad v|^2\, |\grad \Phi|\,dx &\leq 4(C_s \delta)^2 \,\big(\int_{\Omega} \beta(x) |\grad v|^3\, dx\big)^\frac{2}{3}\, \big(\int_{\Omega} \beta(x) |\grad \Phi|^3\, dx\big)^\frac{1}{3}\\
&\leq 1.6 \int_{\Omega} \beta(x) (C_s \delta)^2|\grad v|^3\, dx+ 4 \int_{\Omega} \beta(x) (C_s \delta)^2 |\grad \Phi|^3\, dx.
\end{split}
\end{equation}
The second term is estimated exactly like the last term for $p=3$, $q=\frac{3}{2}$ and $\epsilon=0.6$ as

\begin{equation}
\begin{split}
2\int_{\Omega}(\beta(x) (C_s \delta)^2 |\grad v|\, |\grad \Phi|^2 \, dx
&\leq 2(C_s \delta)^2 \,\big(\int_{\Omega} \beta(x) |\grad v|^3\, dx\big)^\frac{1}{3}\, \big(\int_{\Omega} \beta(x) |\grad \Phi|^3\, dx\big)^\frac{2}{3}\\
&\leq 0.4 \int_{\Omega} \beta(x) (C_s \delta)^2|\grad v|^3\, dx+ 2 \int_{\Omega} \beta(x) (C_s \delta)^2 |\grad \Phi|^3\, dx.
\end{split}
\end{equation}
Inserting these last four estimates into the energy inequality (2.6) for $v$ gives
$$\frac{\partial}{\partial t} ||v||^2+(2 \nu- 6 U \gamma L) ||\grad v||^2\leq 6 \int_{\Omega} \beta(x) (C_s \delta)^2 |\grad \Phi|^3\, dx.$$
Thus, if $\gamma$ is chosen small enough that 
$$\gamma < \frac{1}{3}(\Rey)^{-1},$$
then $(2 \nu- 6 U \gamma L)$ becomes positive. Applying the Poincar{\'e}-Friedrichs inequality $||v||\leq C  ||\grad v||$ gives 

$$\frac{\partial}{\partial t} ||v||^2+C || v||^2\leq 6 \int_{\Omega} \beta(x) (C_s \delta)^2 |\grad \Phi|^3\, dx.$$
Since RHS is uniformly bounded in time, a standard Grönwall's inequality shows that 
$$\sup_{t \in (0,\infty)} ||v(t)|| \leq C < \infty,$$
Which proves the boundedness of the kinetic energy, $\frac{1}{2}||u||^2$. From this and standard arguments it follows that
$$\frac{1}{T} \int_{0}^{T}  (\frac{1}{|\Omega|} \int_ \Omega \nu |\nabla u|^2 + (c_s\delta)^2 \beta(x) |\nabla u|^3 dx) \,\, dt \leq C < \infty,$$
which means $\langle  \varepsilon_{SMD} (u)\rangle$ is well-defined.
\end{proof}

 \subsection{van Driest damping}
 To modify the mixing-length model van Driest proposed \cite{Van}, with some theoretical support but mainly as a good fit to data (p.77 of Wilcox \cite{Wilcox}), that the mixing length $\ell$ should be multiplied by the damping function so that $\ell (x) \rightarrow 0$ as $x\rightarrow$ wall. The van Driest damping function is 
\begin{equation}
f_w(z)=1- e^{\frac{-z^+}{A^+}} , 
\end{equation}
where $A^+ =26$ is the van Driest constant and $z^+$ is the non-dimensional distance from the wall (p.76 of Wilcox \cite{Wilcox})
\begin{equation}
z^+= \frac{u_{\tau}(L-z)}{\nu} , 
\end{equation}
which determines the relative importance of viscous and turbulent phenomena. $u_{\tau}$ is the wall shear velocity given by 
\begin{equation}
u_{\tau} = \sqrt{\left. \nu \frac{\partial u}{\partial z} \right|_{\mbox{wall}}}.
\end{equation}
$u_{\tau}$ is still unknown, the analysis herein will require a specific value for $u_{\tau}$. To this end, it can be estimated as follows. Near the wall $\grad u \simeq \frac{\partial u}{\partial z}$, then  

\begin{equation}
u_{\tau} = \sqrt{\left. \nu \frac{\partial u}{\partial z} \right|_{\mbox{wall}}} \simeq \sqrt{\left. \nu \,\, \grad u \right|_{\mbox{wall}}} = \sqrt[4]{ \nu ^2 (\left. \grad u \right|_{\mbox{wall}})^2} \simeq \sqrt[4]{ \nu \langle \bar{\epsilon}_w \rangle},
\end{equation}
where $ \bar{\epsilon}_w $ is a spatial-average energy dissipation rate near the wall. After assuming a non-zero fraction occurs in near-wall region and therefore  neglecting the effects of viscosity far from the boundary layer, dissipation occurs mainly in the boundary layers near the bottom and top walls which both have a volume of $ L^3\,\gamma$. Hence 

$$ \langle \epsilon \rangle =  2 \frac{1}{L^3} \, ( L^3\,\gamma)  \langle \bar{\epsilon}_w \rangle= 2 \gamma \langle \bar{\epsilon}_w \rangle,$$
On the other hand, based on the statistical equilibrium $ \langle \epsilon \rangle = \frac{U^3}{L}$, therefore

$$ \langle \epsilon \rangle = \frac{U^3}{L} = 2 \gamma \langle \bar{\epsilon}_w \rangle,$$
Using $\gamma=\mathcal{O}(\Rey^{-1})$ gives

\begin{equation}
\langle \bar{\epsilon}_w \rangle \simeq \frac{1}{2} \frac{U^4}{\nu}.
\end{equation}
Then  $u_{\tau}$ is estimated by inserting (2.15) in (2.14) to be
 \begin{equation}
 u_{\tau}\simeq \frac{U}{\sqrt[4]{2}}.
 \end{equation}
Hence van Driest damping function is approximated as (Figure 3)
\begin{equation}
f_w(z)\simeq 1- \exp (\frac{-U (L-z)}{ 26 \sqrt[4]{2} \,\, \nu}).
\end{equation}
Using Taylor series to approximate (2.17) in the boundary layer ${\mathcal{O}_{\gamma L}} $  gives 

\begin{equation}
f_w(z)\simeq \sum\limits_{n=1}^{k} [\frac{1}{26\, \sqrt[4]{2}\, n!} \,\Rey \,  (1- \frac{z}{L})]^n + \mathcal{O}(\Rey ^{k+1}\, (1-\frac{z}{L})^{k+1}).
\end{equation}

\begin{figure}[!tbp]
  \centering
  \begin{minipage}[b]{0.4\textwidth}
    \includegraphics[width=\textwidth]{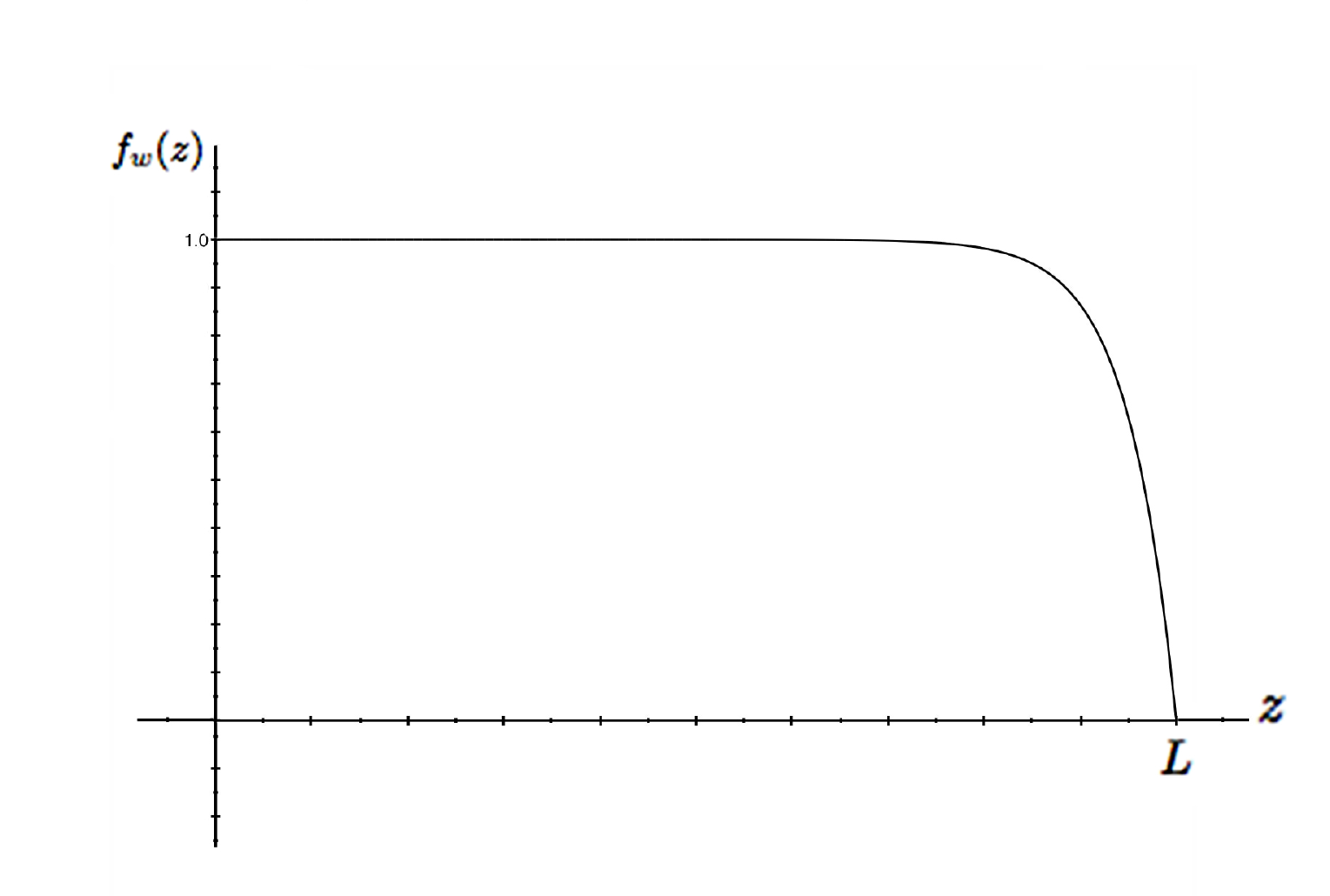}
    \caption{van Driest.}
  \end{minipage}
  \hfill
  \begin{minipage}[b]{0.4\textwidth}
    \includegraphics[width=\textwidth]{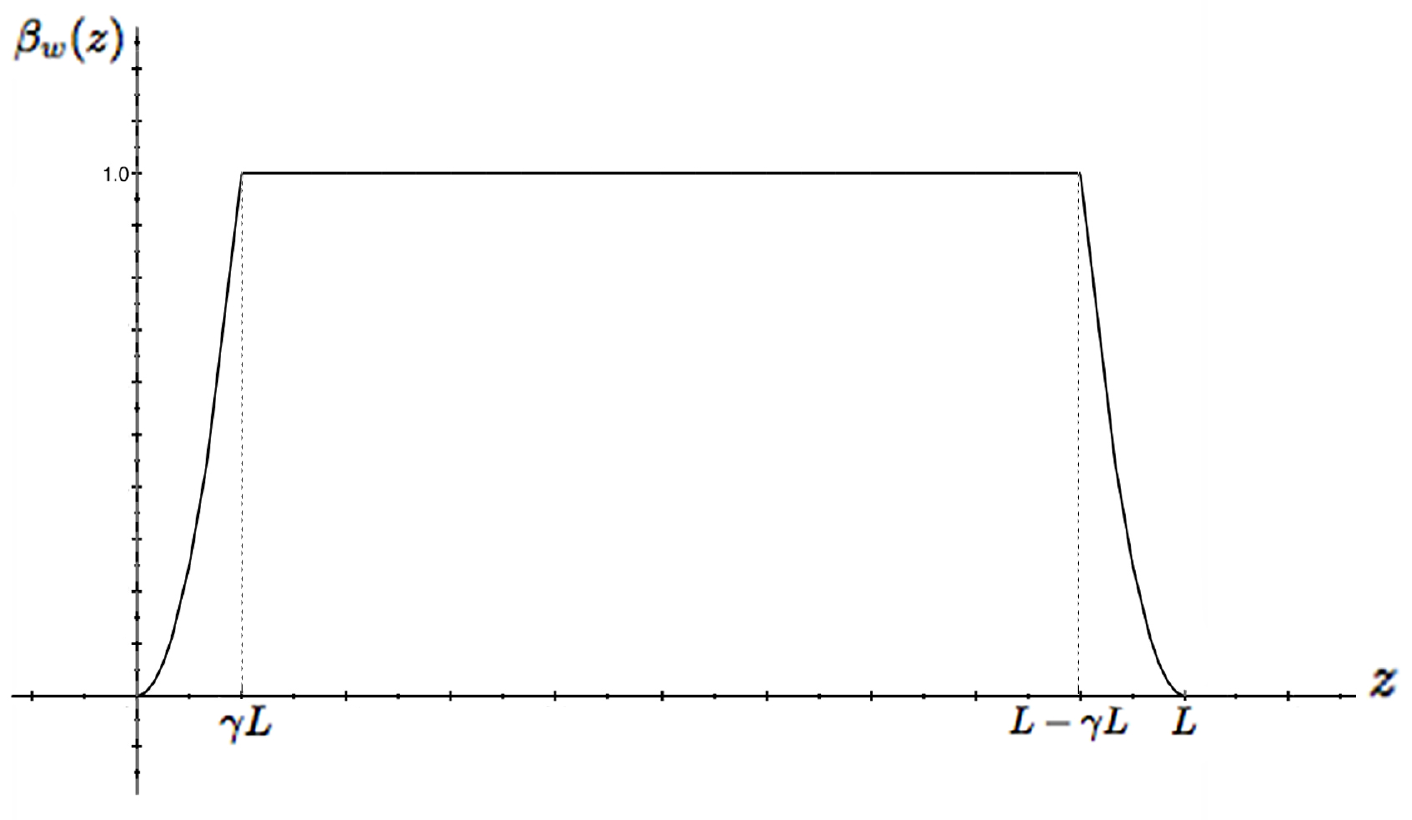}
    \caption{Algebraic approximation.}
  \end{minipage}
\end{figure}

Note that the above approximation (2.18) is valid when the reminder $\Rey \, (1-\frac{z}{L})$ is less than 1, and this occurs when $L-\gamma L \leq  z \leq L.$ Moreover,  approximation (2.18) suggests $(\Rey)^\alpha (1-\frac{z}{L})^\alpha$ for $\alpha\geq1$ as a damping function only on the top layer. Thus (2.19) is an algebraic approximation to the van Driest damping on the whole domain (Figure 4, for $\alpha =2$).
 
 \begin{equation}
 \beta_w(z) =
\left\{
	\begin{array}{ll}
		(\Rey)^\alpha (\frac{z}{L})^\alpha & \mbox{if } z\in [0,\gamma L]  \\
		1  & \mbox{if }  z\in [\gamma L, L-\gamma L]\\
		(\Rey)^\alpha (1-\frac{z}{L})^\alpha & \mbox{if } z\in [L-\gamma L, L] 
	\end{array}.
\right.
 \end{equation}

 \begin{re}
 $\beta_w$ plays the role of $\beta$ in (\ref{Damping}).
 \end{re}
 
 \section{Analysis of the Smagorinsky with Damping Function}
 
\begin{thm}\label{thm1}
Suppose $u_0 \in L^2(\Omega)$ and let \footnote{In fact $\gamma$ can be $ \kappa (\frac{1}{5} \Rey^{-1})$ for any $\kappa \in (0,1)$. Without loss of generality, $\gamma$ is taken to be  $\frac{1}{5.1} \Rey^{-1} $ for simplicity in calculations.} $\gamma =\frac{1}{5.1}\, \Rey^{-1}$ , then for any positive damping function $\beta(z) \in L^1(\Omega)$, $\langle  \varepsilon_{SMD} (u)\rangle$ satisfies 
$$ \langle  \varepsilon_{SMD} (u)\rangle  \leq \big[ C_1 +C_2 \, (\frac{C_s \delta}{L})^2  \, \Rey^3\,\frac{1}{L} \int_{L-\gamma L}^{L}  \beta(z)\,  dz \big]\, \frac{U^3}{L}.$$
\end{thm}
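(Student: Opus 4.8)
The plan is to run the background-flow argument of Hopf and Doering--Constantin, testing the weak form of (\ref{Damping}) against $v=u-\Phi\in\mathbb{X}_0$ and isolating the \emph{full} dissipation of $u$ on one side of an energy identity. First I would substitute $u=v+\Phi$ and use $v$ as the test function: the pressure term drops because $v$ is divergence-free and vanishes on $\partial\Omega$, and the two diffusion terms integrate by parts with no boundary contribution to produce $\nu\|\grad u\|^2-\nu(\grad u,\grad\Phi)$ and $(C_s\delta)^2\int_\Omega\beta|\grad u|^3-(C_s\delta)^2\int_\Omega\beta|\grad u|\,\grad u:\grad\Phi$. For the nonlinear term, $b(u,u,u)=0$ by Lemma \ref{trilinear}; moreover, since $u\x n=0$ on all of $\partial\Omega$ (no penetration at the walls, periodicity in $x,y$), integration by parts gives the skew-symmetry $b(u,u,\Phi)=-b(u,\Phi,u)$ even though $\Phi\notin\mathbb{X}_0$, and with $\Phi=(\phi(z),0,0)\tran$ this collapses to $b(u,u,v)=\frac{U}{\gamma L}\int_{\mathcal{O}_{\gamma L}}u_1u_3\,dx$. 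Collecting terms yields
\begin{equation*}
\begin{split}
\nu\|\grad u\|^2+(C_s\delta)^2\int_\Omega\beta|\grad u|^3 &= -\frac{d}{dt}\Big[\tfrac12\|u\|^2-(u,\Phi)\Big]-\frac{U}{\gamma L}\int_{\mathcal{O}_{\gamma L}}u_1u_3\,dx\\
&\quad+\nu(\grad u,\grad\Phi)+(C_s\delta)^2\int_\Omega\beta|\grad u|\,\grad u:\grad\Phi.
\end{split}
\end{equation*}

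Next I would divide by $|\Omega|$, apply $\frac1T\int_0^T(\cdot)\,dt$, and send $T\to\infty$. The exact-derivative bracket contributes $\frac1T[\tfrac12\|u\|^2-(u,\Phi)]_0^T$, which vanishes in the limit because $\sup_t\|u(t)\|<\infty$ by Lemma \ref{Lemmma0} (and $(u,\Phi)\le\|u\|\,\|\Phi\|$ is likewise bounded via Lemma \ref{lemma1}). Hence $\langle\varepsilon_{SMD}(u)\rangle$ equals the $\limsup$ time-average of the three remaining terms, and it suffices to bound each, pointwise in $t$, by a small multiple of the instantaneous dissipation plus a multiple of $U^3/L$.

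For the turbulent cross term I would use $|\grad u|\,\grad u:\grad\Phi\le|\grad u|^2|\grad\Phi|$, H\"older with exponents $(\tfrac32,3)$ against the weight $\beta$, and Young's inequality to peel off an absorbable $\tfrac{2\epsilon}{3}(C_s\delta)^2\int_\Omega\beta|\grad u|^3$ and a remainder $C(C_s\delta)^2\int_\Omega\beta|\grad\Phi|^3$. Since $\grad\Phi$ is supported on $\mathcal{O}_{\gamma L}$ with $|\grad\Phi|=U/(\gamma L)$ and $\beta=\beta(z)$, this remainder is $(C_s\delta)^2(U/\gamma L)^3L^2\int_{L-\gamma L}^{L}\beta(z)\,dz$; dividing by $|\Omega|=L^3$ and inserting $1/\gamma=5.1\,\Rey$ reproduces exactly the $C_2\,(C_s\delta/L)^2\Rey^3\frac1L\int_{L-\gamma L}^{L}\beta\,dz\;U^3/L$ term. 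The viscous cross term is handled by $\nu(\grad u,\grad\Phi)\le\tfrac14\nu\|\grad u\|^2+\nu\|\grad\Phi\|^2$, where Lemma \ref{lemma1}(d) with $1/\gamma=5.1\,\Rey$ turns $\nu\|\grad\Phi\|^2\le\nu U^2L/\gamma$ into a $C_1U^3/L$ contribution. For the Reynolds-stress term I would write $u_3=v_3$, $u_1=v_1+\phi$, use Cauchy--Schwarz on $\mathcal{O}_{\gamma L}$, and bound $\|v_1\|_{L^2(\mathcal{O}_{\gamma L})},\|v_3\|_{L^2(\mathcal{O}_{\gamma L})}\le\gamma L\|\grad v\|$ by Lemma \ref{lemma2} (equivalently Proposition \ref{prop1} with $p=2$) together with $\|\phi\|_{L^2}=U(\gamma L^3/3)^{1/2}$ from Lemma \ref{lemma1}(c); the large prefactor $U/(\gamma L)$ then combines with the factor $(\gamma L)^2$ from the thin-strip Poincar\'e estimate to give $U\gamma L\|\grad v\|^2=(\nu/5.1)\|\grad v\|^2$, with a final Young step on the $\phi v_3$ piece producing a further absorbable $\|\grad v\|^2$ and another $C_1U^3/L$ term.

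The crux is the absorption. After substituting $\|\grad v\|^2\le2\|\grad u\|^2+2\|\grad\Phi\|^2$, every $\nu\|\grad u\|^2$ and $(C_s\delta)^2\int_\Omega\beta|\grad u|^3$ piece sent to the right must carry a combined coefficient strictly below the dissipation on the left; the leftover $\|\grad\Phi\|^2$ pieces fold into $C_1U^3/L$. This is precisely the mechanism that forces $\gamma=\mathcal{O}(\Rey^{-1})$: the thinness of the strip converts the large factor $U/(\gamma L)\sim\Rey\,U/L$ into a fraction of $\nu$. I expect this bookkeeping---tracking each constant so that $\tfrac14$ (viscous cross term) $+\,\tfrac{2}{5.1}$ (Reynolds stress) $+\cdots<1$---to be the main obstacle, and it is what pins down the explicit choice $\gamma=\frac{1}{5.1}\Rey^{-1}$. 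Once the absorbable part is moved left one is left with $(1-\theta)\langle\varepsilon_{SMD}(u)\rangle\le\big[C_1+C_2(C_s\delta/L)^2\Rey^3\frac1L\int_{L-\gamma L}^{L}\beta\,dz\big]U^3/L$ for some $\theta<1$, and dividing by $1-\theta$ (absorbing it into $C_1,C_2$) gives the stated bound.
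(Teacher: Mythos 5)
Your proposal is correct and follows essentially the same Hopf/Doering--Constantin background-flow argument as the paper: test the weak form with $u-\Phi$, absorb the viscous and eddy-viscosity cross terms using the thin-strip Poincar\'e estimate and the weighted H\"older--Young step with exponents $(\tfrac32,3)$, and evaluate $\int_\Omega\beta|\grad\Phi|^3\,dx=(U/(\gamma L))^3L^2\int_{L-\gamma L}^{L}\beta(z)\,dz$ with $\gamma^{-1}=5.1\,\Rey$ to produce the $\Rey^3$ factor. The only (harmless) deviation is your handling of the nonlinear term, which you collapse via skew-symmetry to the single Reynolds-stress integral $\frac{U}{\gamma L}\int_{\mathcal{O}_{\gamma L}}u_1u_3\,dx$, whereas the paper expands $b(u,u,\Phi)$ into four pieces through the decomposition $u=(u-\Phi)+\Phi$ in (3.7)--(3.12); both routes yield the same absorbable $c\,U\gamma L\Vert\grad u\Vert^2$ plus a $C\,U^3L^2$ remainder.
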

\begin{proof}

The weak form of (\ref{Damping}) is obtained by taking the scalar product $ v\in\mathbb{X}_0$ and $q \in L_0^2 $ with (\ref{Damping}) and integrating over the space $\Omega$. 

\begin{equation}
  \label{Damping-weak} 
  \begin{aligned}
 (u_t, v) + \nu (\nabla u,\nabla v) + b(u,u,v) - (p,\nabla \x v)+(\beta (x)(C_s \delta)^2 |\grad u| \grad u,\grad v)&= 0 & \forall v \in \mathbb{X}_0  ,
    \\
  (\nabla \x u,q) & =0 & \forall q\in L_0^2  ,
   \\
    (u(x,0) - u_0(x) , v)
    &=
    0 &\forall v \in \mathbb{X}_0.  
   \end{aligned}
  \end{equation}
  Take $v = u- \Phi$ in (\ref{Damping-weak}). Using the skew-symmetry of  $b(\x,\x,\x)$ (Lemma \ref{trilinear}) and $\grad \x \Phi =0$ gives

$$(u_t,u-\Phi)+\nu(\nabla u,\nabla u-\nabla\Phi)+b(u,u,u-\Phi)+(\beta (x)(C_s \delta)^2|\nabla u| \nabla u, \nabla u-\nabla\Phi)=0,$$
which is equivalent to the following
\begin{equation} 
\begin{split}
\frac{1}{2}\frac{d}{dt}\Vert u\Vert^2 +\nu \Vert \nabla u\Vert ^2+(C_s \delta)^2(\beta(x) |\nabla u|\nabla u,\nabla u) & = \frac{d}{dt}(u,\Phi)+b(u,u,\Phi)+\nu(\nabla u,\nabla\Phi)\\
& + (C_s \delta)^2(\beta (x)|\nabla u|\nabla u,\nabla \Phi).
\end{split}
\end{equation}
Integrating with respect to time from above equation gives
\begin{equation} 
\begin{split}
\frac{1}{2} \Vert u(T)\Vert^2 - \frac{1}{2} \Vert u(0)\Vert^2 &+\nu \int _{0}^{T} \Vert \nabla u\Vert^2 dt +\int_{0}^{T}(\int_\Omega (C_s \delta)^2 \beta (x) |\nabla u|^3 dx) dt = (u(T),\Phi)-(u(0),\Phi) \\
 & +\int _{0}^{T}b(u,u,\Phi) dt+\nu \int _{0}^{T} (\nabla u,\nabla\Phi) dt
 + (C_s \delta)^2 \int_{0}^{T}(\beta (x)|\nabla u| \nabla u, \nabla \Phi) dt.\\
\end{split}
\end{equation}

 The proof continues by bounding, term by term, each term on the right-hand side of the energy equality (3.3). Using the Cauchy-Schwarz Young inequality and Lemma \ref{lemma1}, the first three terms are estimated as follows.

\begin{equation}
(u(T),\Phi)\leq \frac{1}{2}\Vert u(T)\Vert^2 +\frac{1}{2}\Vert\Phi\Vert^2 = \frac{1}{2}\Vert u(T)\Vert^2 +\frac{U ^2 \gamma L^3}{6}.
\end{equation}
\begin{equation}
(u(0),\Phi)\leq \Vert u(0)\Vert \Vert\Phi\Vert = \sqrt{\frac{\gamma}{3}} U L^{\frac{3}{2}} \Vert u(0)\Vert.
\end{equation}
\begin{equation}
\nu \int _{0}^{T} (\nabla u,\nabla\Phi) dt \leq \frac{\nu}{2} \int _{0}^{T}\Vert \nabla u\Vert^2 + \Vert \nabla\Phi\Vert ^2  dt = \frac{\nu}{2} \int _{0}^{T}\Vert \nabla u\Vert^2 dt + \frac{\nu}{2}  \frac{U^2 L}{\gamma}T.
\end{equation}

For the nonlinear term $b(\x,\x,\x)$ in (3.3) add and subtract terms and  then use skew-symmetry. This gives 

\begin{equation} \label{eq1}
\begin{split}
b(u,u,\Phi) & = b(u-\Phi,u-\Phi,\Phi)+ b(\Phi,u-\Phi,\Phi)  \\
 & = \frac{1}{2}  b(u-\Phi,u-\Phi,\Phi) - \frac{1}{2} b(u-\Phi,\Phi,u-\Phi)\\
 & +\frac{1}{2}  b(\Phi,u-\Phi,\Phi) - \frac{1}{2}  b(\Phi,\Phi,u-\Phi).
\end{split}
\end{equation}
To estimate the four terms in (3.7), use  Lemma \ref{lemma1}, Lemma \ref{lemma2}, the Cauchy-Schwarz Young inequality. Moreover, apply the fact that  $b(u,u,\Phi)$ is an integration on $\mathcal{O}_{\gamma L}$ since  supp$(\Phi) =\overline{\mathcal{O}} _{\gamma L}$. For the first term in (3.7) we have 
\begin{equation} 
\begin{aligned}
& b(u-\Phi,u-\Phi,\Phi) \leq \Vert\Phi\Vert_{L^\infty(\mathcal{O}_{\gamma L})} \Vert u-\Phi\Vert_{L^2(\mathcal{O}_{\gamma L})}  \Vert \nabla(u-\Phi)\Vert_{L^2(\mathcal{O}_{\gamma L})}\leq\gamma LU \Vert \nabla (u-\Phi)\Vert^2_{L^2(\mathcal{O}_{\gamma L})}\\
  & \leq\gamma LU \Vert \nabla u- \nabla\Phi\Vert^2_{L^2} \leq UL\gamma (\Vert\nabla u\Vert+ \Vert\nabla\Phi\Vert)^2  \leq UL\gamma (2 \Vert \nabla u \Vert^2 + 2 \Vert \nabla \Phi \Vert^2) \\
 &\leq UL\gamma (2 \Vert \nabla u \Vert^2+2 \frac{U^2 L}{\gamma})= 2 UL\gamma \Vert \nabla u\Vert^2 +2 U^3L^2.\\
 \end{aligned}
\end{equation}
For the second term we have
\begin{equation} 
\begin{aligned}
&b(u-\Phi,\Phi,u-\Phi)  \leq \Vert \nabla\Phi\Vert_{L_\infty(\mathcal{O}_{\gamma L})} \Vert u-\Phi\Vert_{L^2(\mathcal{O}_{\gamma L})} ^2 \leq \frac{U}{\gamma L} \gamma ^2 L^2 \Vert \nabla(u-\Phi)\Vert_{L^2(\mathcal{O}_{\gamma L})}^2\\
& \leq \gamma ^2 L^2 \frac{U}{\gamma L}(2 \Vert \nabla u \Vert^2+2 \frac{U^2 L}{\gamma})=2 \gamma LU \Vert \nabla u\Vert^2+ 2U^3 L^2.
\end{aligned}
\end{equation} 
The third one is estimated as
\begin{equation} 
\begin{split}
&b(\Phi,u-\Phi,\Phi)  \leq \Vert \Phi\Vert_{L^\infty(\mathcal{O}_{\gamma L})} \Vert \nabla (u-\Phi)\Vert_{L^2(\mathcal{O}_{\gamma L})} \Vert \Phi\Vert_{L^2(\mathcal{O}_{\gamma L})} \leq U \sqrt{\frac{U^2\gamma L^3}{3}}(\Vert \nabla u\Vert+ \Vert \nabla \Phi\Vert)\\
& \leq U \sqrt{\frac{U^2\gamma L^3}{3}}(\Vert \nabla u\Vert+ \sqrt{\frac{U^2 L}{\gamma}})  \leq\frac{U^2\gamma ^ {\frac{1}{2}} L^{\frac{3}{2}}}{\sqrt{3}} \Vert \nabla u\Vert + \frac{U^3 L^2}{\sqrt{3}}\\
 & =[ \frac{U^{\frac{3}{2}} L}{\sqrt{3}}]\,\,\, [(U\gamma L)^{\frac{1}{2}}\Vert \nabla u\Vert] + \frac{U^3 L^2}{\sqrt{3}} \leq (\frac{U^3 L^2}{6}) + \frac{1}{2} UL\gamma \Vert \nabla u\Vert ^2+ \frac{U^3 L^2}{\sqrt{3}} \\
 &= \frac{1}{2} UL\gamma \Vert \nabla u\Vert^2 + (\frac{\sqrt{3}}{3}+\frac{1}{6})   U^3 L^2. \\
 \end{split}
\end{equation}
And finally the last one satisfies 
\begin{equation} 
\begin{split}
&b(\Phi,\Phi,u-\Phi) \leq \Vert\Phi\Vert_{L^\infty(\mathcal{O}_{\gamma L})} \Vert \nabla\Phi\Vert_{L^2(\mathcal{O}_{\gamma L})} \Vert u-\Phi\Vert_{L^2(\mathcal{O}_{\gamma L})}  \leq U\sqrt{\frac{U^2 L}{\gamma}} \gamma L \Vert \nabla (u-\Phi)\Vert_{L^2(\mathcal{O}_{\gamma L})}\\
&\leq U^2 \gamma^{\frac{1}{2}} L^{\frac{3}{2}} (\Vert\nabla u\Vert +\Vert\nabla\Phi\Vert)\leq U^2 \gamma^{\frac{1}{2}} L^{\frac{3}{2}} (\Vert\nabla u\Vert +(\frac{U^2 L}{\gamma})^{\frac{1}{2}})\\
&= U^2 \gamma^{\frac{1}{2}} L^{\frac{3}{2}} \Vert\nabla u\Vert + U^3 L^2 =[U^{\frac{3}{2}}L]\,\,[(UL\gamma)^{\frac{1}{2}} \Vert \nabla u\Vert]+ U^3 L^2\\
&\leq\frac{1}{2}(U^3 L^2)+ \frac{1}{2} UL\gamma \Vert \nabla u\Vert^2 + U^3 L^2 =\frac{1}{2} UL\gamma \Vert \nabla u\Vert^2+\frac{3}{2}U^3 L^2.\\
\end{split}
\end{equation}
Using (3.8), (3.9), (3.10) and (3.11) in (3.7) gives the final estimation for the non-linear term as below.

\begin{equation}
 | b(u,u,\Phi)|\leq \frac{5}{2}UL\gamma \Vert \nabla u\Vert^2 +\frac{19}{6} U^3 L^2.
\end{equation}

Finally the last term on the RHS of (3.3) can be estimated as the follows. Using  Hölder's inequality and Young inequality for $p=\frac{3}{2}$ and $q=3$  gives 

\begin{equation} 
\begin{split}
|(\beta(x)|\nabla u| \nabla u, \nabla \Phi)|&  \leq \int_\Omega |\beta(x)| |\nabla u|^2 |\nabla \Phi| dx \\
 &=\int_\Omega (\beta ^{\frac{2}{3}}|\nabla u|^2)\,\,(|\beta| ^{\frac{1}{3}}|\nabla \Phi|) dx\\
 & \leq\ [\int_\Omega \beta | \nabla u|^3 dx]^{\frac{2}{3}}\,\,[\int_\Omega \beta | \nabla \Phi|^3 dx]^{\frac{1}{3}}\leq\ \frac{2}{3} \int_\Omega\beta | \nabla u|^3 dx+ \frac{1}{3}\int_\Omega \beta | \nabla \Phi|^3 dx.
\end{split} 
\end{equation}
Inserting (3.4), (3.5), (3.6), (3.12) and (3.13) in (3.3) implies 
\begin{equation}
\begin{split}
 \frac{1}{2} \Vert u(T)\Vert ^2 -& \frac{1}{2} \Vert u(0)\Vert ^2 +\nu \int_{0}^{T} \Vert \nabla u\Vert ^2 dt +(C_s \delta)^2 \int _{0}^{T} (\int _\Omega  \beta (x)|\nabla u|^3 dx)dt  \leq  \frac{1}{2}\Vert u(T)\Vert^2 +\frac{U ^2 \gamma L^3}{6} \\
 &+ \sqrt{\frac{\gamma}{3}} U L^{\frac{3}{2}} \Vert u(0)\Vert  + \frac{5}{2}UL\gamma \int_{0}^{T}\Vert \nabla u\Vert^2 dt+\frac{19}{6} U^3 L^2 T +  \frac{\nu}{2} \int _{0}^{T}\Vert \nabla u\Vert^2 dt + \frac{\nu}{2}  \frac{U^2 L}{\gamma} T \\
 & + \frac{2}{3} (C_s \delta)^2  \int_{0}^{T}(\int_\Omega \beta(x) |\nabla u|^3 dx)dt + \frac{1}{3} (C_s \delta)^2\int_{0}^{T}(\int_\Omega \beta(x) |\grad \Phi|^3 dx) dt.
\end{split}
\end{equation}
Since the kinetic energy is bounded (Lemma \ref{Lemmma0}), the above inequality becomes
\begin{equation} 
\begin{split}
(\frac{1}{2}- \frac{5}{2 \nu}& \gamma LU) \int_{0}^{T} \nu \Vert \nabla u\Vert ^2 dt +\frac{1}{3} (C_s \delta)^2 \int _{0}^{T} (\int _\Omega  \beta (x)|\nabla u|^3 dx)dt \leq  \frac{1}{2} \Vert u(0)\Vert ^2+ \frac{1}{6} U^2\gamma L^3    \\
 &+ \sqrt{\frac{\gamma}{3}} U L^{\frac{3}{2}} \Vert u(0)\Vert + \frac{19}{6}U^3 L^2 T + \frac{\nu}{2 \gamma} L U^2 T +\frac{1}{3} (C_s \delta)^2 \int_{0}^{T}(\int_\Omega \beta(x) |\grad \Phi|^3 dx) dt.
\end{split}
\end{equation}
Finally dividing both sides of (3.15) by $T$ and $ |\Omega| = L^3$ and  taking the limit superior leads to
\begin{equation}\label{choiceofgamma}
  min \lbrace \frac{1}{2} - \frac{5}{2} \frac{\gamma L U}{\nu}, \frac{1}{3}\rbrace \langle  \varepsilon_{SMD} (u)\rangle  \leq \frac{19}{6} \frac{U^3}{L} +\frac{\nu}{2\gamma} \frac{U^2}{L^2} + \frac{1}{3}\frac{1}{L^3} (C_s \delta)^2 \int_\Omega \beta(x) |\grad \Phi|^3 dx.
 \end{equation}
 The above inequality leads to the last step when $C_1$ and $C_2$ are positive and independent of viscosity, diam($\Omega$) and lid velocity. Take $\gamma = \frac{1}{5.1}\Rey^{-1}$, then $(\frac{1}{2}- \frac{5}{2 \nu} \gamma LU) $ becomes positive and therefore

\begin{equation}
 \langle  \varepsilon_{SMD} (u)\rangle  \leq C_1 \frac{U^3}{L}+C_2 \frac{1}{L^3} (C_s \delta)^2 \int_\Omega \beta(x) |\grad \Phi|^3 dx.
\end{equation}
Because the background flow $\Phi$ vanishes on $(\Omega \backslash \mathcal{O}_{\gamma L} )$, we have
\begin{equation}
\int_{\Omega} \beta(x) |\grad \Phi|^3 dx= (\frac{U}{\gamma L})^3\int_{0}^{L} \int_{0}^{L}\int_{L- \gamma L}^{ L} \beta (x,y,z) dx dy dz = \frac{U^3}{\gamma^3 L} \int_{L-\gamma L}^{L} \beta(z) dz.
\end{equation}
Inserting (3.18) in (3.17) proves Theorem \ref{thm1}.
\end{proof}

\subsection{Evaluation of Damping Functions}
  
  Theorem \ref{thm1} is the starting point for the evaluation of damping functions. It is next applied to two damping functions in Corollaries \ref{Cor1} and \ref{Cor2} and the result compared. 

\begin{cor}\label{Cor1}
For the algebraic approximation of the van Driest damping function, $\beta_w(z)$ in (2.10), we have 
$$ \langle  \varepsilon_{SMD} (u)\rangle  \leq  \big[ C_1 +C_2 \,\frac{1}{\alpha+1}\, (\frac{C_s \delta}{L})^2   \, \Rey^2 \big] \, \frac{U^3}{L}.$$
\end{cor}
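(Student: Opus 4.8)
The plan is to apply Theorem \ref{thm1} essentially verbatim and simply evaluate the strip integral $\frac{1}{L}\int_{L-\gamma L}^{L}\beta_w(z)\,dz$ for the algebraic van Driest approximation $\beta_w$, then keep careful track of the powers of $\Rey$. The only branch of the piecewise definition of $\beta_w$ that matters is the one on the top layer $[L-\gamma L,L]$, where $\beta_w(z)=\Rey^\alpha\,(1-\frac{z}{L})^\alpha$; the lower-wall branch and the interior value $\beta_w\equiv 1$ never enter, because the integral appearing in Theorem \ref{thm1} runs only over the support $\overline{\mathcal{O}}_{\gamma L}$ of the background flow near the moving wall.

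First I would compute the integral by the substitution $s=1-\frac{z}{L}$, which gives
\[
\frac{1}{L}\int_{L-\gamma L}^{L}\beta_w(z)\,dz=\Rey^\alpha\int_{0}^{\gamma}s^\alpha\,ds=\frac{\Rey^\alpha\,\gamma^{\alpha+1}}{\alpha+1}.
\]
Then I would insert $\gamma=\frac{1}{5.1}\,\Rey^{-1}$, so that $\gamma^{\alpha+1}=(5.1)^{-(\alpha+1)}\Rey^{-(\alpha+1)}$, and substitute into the bound of Theorem \ref{thm1}. The decisive factor is
\[
\Rey^{3}\cdot\frac{\Rey^\alpha\,\gamma^{\alpha+1}}{\alpha+1}=\frac{(5.1)^{-(\alpha+1)}}{\alpha+1}\,\Rey^{\,3+\alpha-(\alpha+1)}=\frac{(5.1)^{-(\alpha+1)}}{\alpha+1}\,\Rey^{2},
\]
where the three powers of $\Rey$ in the prefactor of Theorem \ref{thm1} together with the $\alpha$ powers in the amplitude are cut down by the $\alpha+1$ negative powers supplied by $\gamma^{\alpha+1}$. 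Since $(5.1)^{-(\alpha+1)}\le 1$ for $\alpha\ge 1$, this dimensionless prefactor is absorbed into the generic constant $C_2$, and the claimed estimate follows at once.

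There is essentially no obstacle here beyond the bookkeeping: the entire content of the corollary is that the cubic growth $\Rey^{3}$ in the general bound is reduced to quadratic growth $\Rey^{2}$ precisely because the van Driest-type profile supplies an extra factor $\gamma^{\alpha+1}\sim\Rey^{-(\alpha+1)}$ that outpaces its own amplitude $\Rey^\alpha$ by one power of $\Rey$. The single point deserving care is to confirm at the outset that only the top branch of $\beta_w$ contributes to the strip integral, so that the computation above is in fact an equality rather than merely an upper bound; everything after that is a direct substitution into Theorem \ref{thm1}.
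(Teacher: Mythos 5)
Your proposal is correct and is precisely the calculation the paper leaves implicit: its proof of Corollary \ref{Cor1} consists of the single remark that the result follows by inserting $\beta_w$ into Theorem \ref{thm1}, and your substitution $s=1-\frac{z}{L}$, the resulting value $\frac{\Rey^\alpha\gamma^{\alpha+1}}{\alpha+1}$ for the strip integral, and the cancellation $\Rey^{3+\alpha-(\alpha+1)}=\Rey^2$ with $\gamma=\frac{1}{5.1}\Rey^{-1}$ are exactly the intended bookkeeping. Your observation that only the top-wall branch of the piecewise definition of $\beta_w$ enters the integral is also the right point to check, and it holds because the integral in Theorem \ref{thm1} is taken only over $[L-\gamma L,L]$.
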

 \begin{proof}
The result is a calculation by applying $\beta_w(z)$ in (2.10) to the Theorem \ref{thm1}.
 \end{proof}
 The upper bound in  Corollary \ref{Cor1} is a function of the global velocity $U$, domain diameter $L$, the eddy size $\delta$, and surprisingly, the Reynolds number. Moreover, it blows up as $\Rey \rightarrow \infty$. Due to this fact one can propose the following modification to $\beta_w(z)$. Consider $\beta_d(z) \in C^1(\Omega)$  in (3.19) which is based on a connection of the algebraic damping near the wall smoothly to the no damping in the interior by hermite interpolation. It is given by
 
 \begin{equation}
 \beta_d(z) =
\left\{
	\begin{array}{ll}
		 (\frac{z}{L})^\alpha (1-\frac{z}{L})^\alpha & \mbox{if } z\in [0,\gamma L]  \\
		 a_1(z-\gamma L)^3+b_1(z-\gamma L)^2+c_1(z-\gamma L)+d_1 & \mbox{if }  z\in [\gamma L,  2 \gamma L]\\
		1  & \mbox{if }  z\in [2 \gamma L, L- 2\gamma L]\\
		 a_2 (z+ 2\gamma L-L)^3 +b_2 (z+2\gamma L -L)^2+1 & \mbox{if }  z\in [L- 2\gamma L,  L- \gamma L]\\
	   (\frac{z}{L})^\alpha (1-\frac{z}{L})^\alpha & \mbox{if } z\in [L-\gamma L, L] 
	\end{array},
\right.
 \end{equation}

where $\alpha\geq 0$ and $a_1, a_2, b_1, b_2, c_1$ and $d_1$ are constant such that

\begin{itemize}
\item $a_1= \frac{-2}{\gamma^3 L^3}+\frac{1}{L^3} \alpha \gamma^{\alpha-3} (1-\gamma)^{\alpha-1}(1-2\gamma)+\frac{2}{L^3} \gamma^{\alpha-3} (1-\gamma)^\alpha$,
\item  $b_1= \frac{3}{\gamma^2 L^2} -\frac{2}{L^2} \alpha \gamma^{\alpha-2} (1-\gamma)^{\alpha-1} (1-2 \gamma)-\frac{3}{L^2} \gamma^{\alpha-2} (1-\gamma)^\alpha$,
\item   $c_1= \frac{1}{L} \alpha \gamma^{\alpha-1} (1-\gamma)^{\alpha-1}(1-2\gamma)$, 
\item $d_1= \gamma^\alpha (1-\gamma)^\alpha$,
\item $a_2= - a_1$,
\item $b_2= -\frac{3}{\gamma^2 L^2} +\frac{1}{L^2} \alpha \gamma^{\alpha-2} (1-\gamma)^{\alpha-1} (1-2 \gamma)+\frac{3}{L^2} \gamma^{\alpha-2} (1-\gamma)^\alpha$.
\end{itemize}

 \begin{figure}
 \includegraphics[scale=0.4]{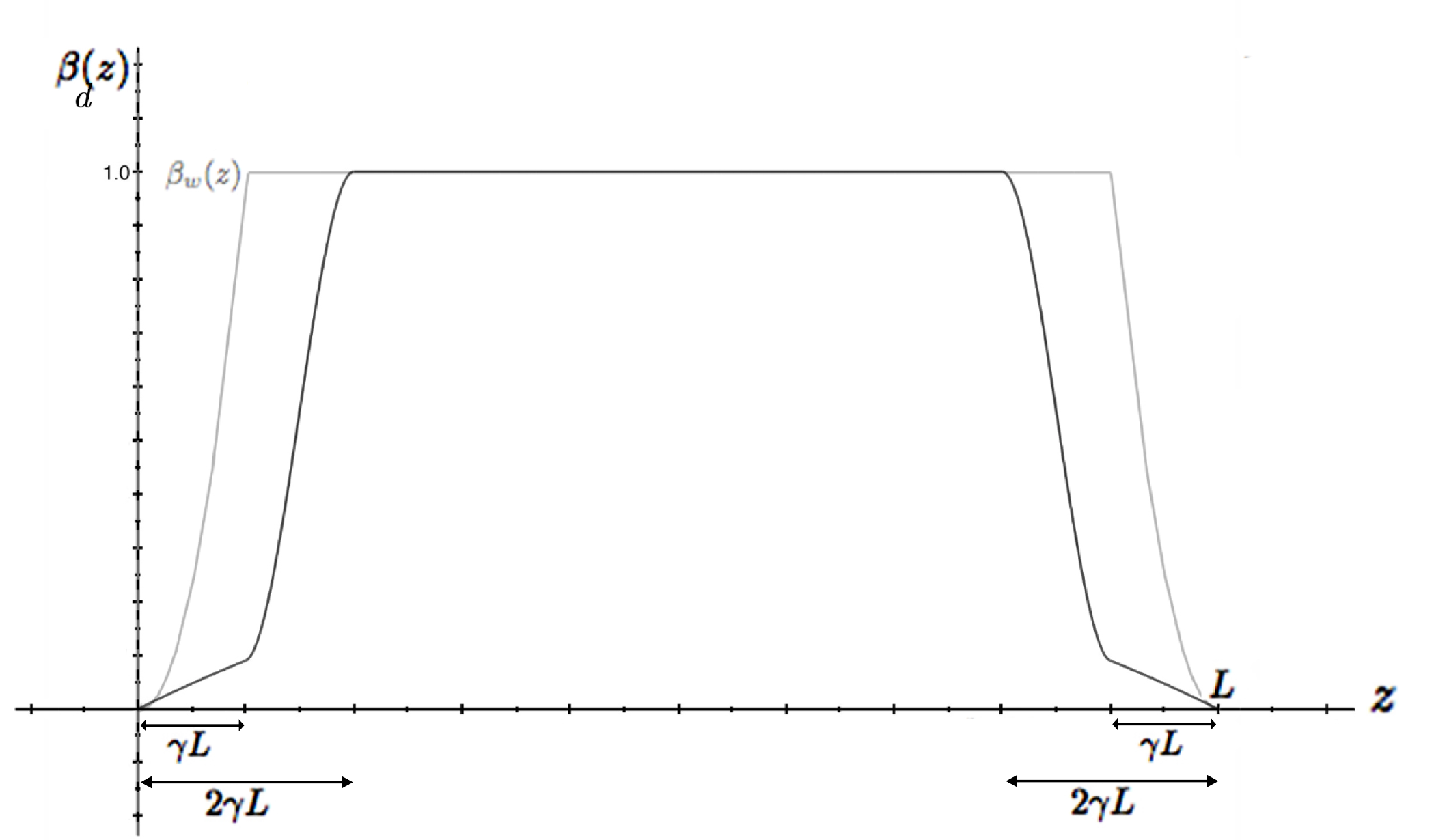}
\caption{Proposed damping function (3.19).}
\end{figure}

  \begin{re}
All the constants above are calculated such that $\beta_d(z) \in C^1$. $\beta_d$, which plays the role of $\beta$ in (\ref{Damping}), is sketched in Figure 5 and compared to $\beta_w$. They both are symmetric, bounded and vanish at  $z=0$ and $z=L$. Moreover, they are almost 1 on the whole domain except on the thin boundary layers. 
  \end{re}
  
 \begin{cor}\label{Cor2}
Suppose $u_0 \in L^2(\Omega)$ and $\beta_d$ given by (3.19) with $\,2 \leq \alpha \in\mathbb{N} $. Then, for any $\Rey\geq 1$ we have 

$$ \langle  \varepsilon_{SMD} (u)\rangle  \leq \big[ C_1+C_2(\frac{C_s \delta}{L})^2 \big]  \,\, \frac{U^3}{L}.$$

\end{cor}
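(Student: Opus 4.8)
The plan is to use Theorem \ref{thm1} as a black box and reduce the whole corollary to estimating the single boundary integral $\frac{1}{L}\int_{L-\gamma L}^{L}\beta_d(z)\,dz$ that appears in its statement. The virtue of this approach is that the bound of Theorem \ref{thm1} sees only the values of the damping function on the thin top strip $\mathcal{O}_{\gamma L}$: the interior value $\beta_d\equiv 1$, the two Hermite patches on $[\gamma L,2\gamma L]$ and $[L-2\gamma L,L-\gamma L]$, and the entire lower boundary layer are all irrelevant to the estimate. Thus, despite the elaborate definition (3.19), only the top piece $\beta_d(z)=(\frac{z}{L})^\alpha(1-\frac{z}{L})^\alpha$ on $[L-\gamma L,L]$ enters the calculation, and none of the matching constants $a_i,b_i,c_1,d_1$ matters.

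First I would bound $\beta_d$ on the strip. For $z\in[L-\gamma L,L]$ one has $\frac{z}{L}\leq 1$, so $(\frac{z}{L})^\alpha\leq 1$ and hence $\beta_d(z)\leq(1-\frac{z}{L})^\alpha$. Substituting $s=1-\frac{z}{L}$ (so that $dz=-L\,ds$ and the limits $z=L-\gamma L,\,L$ become $s=\gamma,\,0$) gives
\[
\frac{1}{L}\int_{L-\gamma L}^{L}\beta_d(z)\,dz \;\leq\; \int_0^\gamma s^\alpha\,ds \;=\; \frac{\gamma^{\alpha+1}}{\alpha+1}.
\]
This is the only genuine computation in the proof, and it is immediate.

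Next I would insert $\gamma=\frac{1}{5.1}\Rey^{-1}$ into the $\Rey^3$-weighted term of Theorem \ref{thm1}, obtaining
\[
\Rey^3\,\frac{1}{L}\int_{L-\gamma L}^{L}\beta_d(z)\,dz \;\leq\; \frac{1}{\alpha+1}\Big(\frac{1}{5.1}\Big)^{\alpha+1}\Rey^{\,2-\alpha}.
\]
The decisive observation, which is the real content of the corollary, is that the $\Rey^3$ amplification coming from the model viscosity in the boundary layer is now exactly defeated by the $\gamma^{\alpha+1}\sim\Rey^{-(\alpha+1)}$ smallness of the strip integral, leaving the clean power $\Rey^{\,2-\alpha}$. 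The hypothesis $2\leq\alpha\in\mathbb{N}$ is precisely what forces this exponent to be nonpositive, so that for every $\Rey\geq 1$ one has $\Rey^{\,2-\alpha}\leq 1$. Absorbing the resulting constant into $C_2$ then yields the stated bound $[C_1+C_2(\frac{C_s\delta}{L})^2]\frac{U^3}{L}$, with no surviving dependence on $\Rey$.

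I do not expect a serious obstacle here, since the analytical heavy lifting was already done in Theorem \ref{thm1}; the only point requiring care is confirming that the piecewise definition (3.19) does reduce to the top-strip branch on $\mathcal{O}_{\gamma L}$, and that the inequality $(\frac{z}{L})^\alpha\leq 1$ (rather than an equality) is all that is needed. The contrast with Corollary \ref{Cor1} also clarifies why $\alpha\geq 2$ suffices here while van Driest's algebraic form still over-dissipates: the function $\beta_w$ carries an extra prefactor $\Rey^\alpha$ on the strip, which replaces $\Rey^{\,2-\alpha}$ by $\Rey^{\,2}$ and reinstates the blow-up.
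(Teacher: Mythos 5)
Your proposal is correct and follows essentially the same route as the paper: both reduce the corollary to the single estimate $\int_{L-\gamma L}^{L}\beta_d(z)\,dz\leq C_\alpha L\gamma^{\alpha+1}$ and then use $\gamma=\frac{1}{5.1}\Rey^{-1}$ with $\alpha\geq 2$, $\Rey\geq 1$ to kill the $\Rey^{3}$ factor. The only (cosmetic) difference is that you bound $(\frac{z}{L})^\alpha\leq 1$ directly on the top strip, whereas the paper invokes the symmetry of $\beta_d$ and a binomial expansion of $(1-\frac{z}{L})^\alpha$ before dropping the negative terms; your version is slightly cleaner but mathematically equivalent.
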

\begin{proof}
Considering $\beta_d$ is symmetric on the whole domain implies 
$$
\int_{L- \gamma L}^{ L} \beta_d (z)  dz =  \int_{0}^{\gamma L} (\frac{z}{L})^ {\alpha} (1-\frac{z}{L})^{\alpha} dz.
$$
Now applying the Binomial Theorem on $(1- \frac{z}{L})^\alpha$ and then taking integral gives
\begin{equation}
\begin{split}
\int_{L- \gamma L}^{ L} \beta_d (z)  dz= L \gamma ^{\alpha+1}\big( &\frac{1}{\alpha+1} -\frac{1}{\alpha+2} \alpha \gamma + \frac{1}{\alpha+3} \frac{\alpha (\alpha-1)}{2} \gamma ^2\\
&+ ...+ (-1)^{\alpha} \frac{1}{2\alpha} \alpha \gamma^{\alpha-1} +(-1)^{\alpha}\frac{1}{2 \alpha+1} \gamma^{\alpha}\big).
\end{split}
\end{equation}
After dropping negative terms, since $\gamma = \frac{1}{5.1} (\Rey)^{-1} \ll1$ the RHS of (3.20) can be bounded above by a constant, $C_{\alpha}$, which depends on $\alpha$. Therefore
\begin{equation}
\int_{L- \gamma L}^{ L} \beta_d (z)  dz\leq C_{\alpha} L \gamma^{\alpha+1},
\end{equation}
Using  $\gamma = \frac{1}{5.1} (\Rey)^{-1}$  and inserting the above in the Theorem \ref{thm1} imply  
$$ \langle  \varepsilon_{SMD} (u)\rangle  \leq \big[ C_1+C_2\,(\frac{C_s \delta}{L})^2 \,(\frac{1}{5.1})^3\, C_{\alpha} \,\gamma ^{\alpha-2} \big]  \,\, \frac{U^3}{L}.$$
Use the assumptions $\alpha\geq 2$ and $\gamma = \frac{1}{5.1} (\Rey)^{-1} \ll1$ imply $\gamma^{\alpha-2}\leq 1$ and now the corollary is proved.  

 \end{proof}

Corollary 3.4 is in accordance with the Kolmogorov theory of turbulence. It establishes that the combination of SM with damping function $\beta_d(z) $  given by (3.19) does not over dissipate, and the energy input rate $\frac{U^3}{L}$ is balanced by $\langle  \varepsilon _{SMD}(u)\rangle $.  This estimate is consistent with the rate proven for the NSE in \cite{EDR-shear} and \cite{Foias-charles}; it is also dimensionally consistent. 

 The assumption $\alpha\geq 2$ is a significant one in the analysis. When $\alpha=1$ we obtain the following corollary.
\begin{cor}\label{Cor3}
Suppose $\alpha=1$ in Corollary 3.4, then 
$$ \langle  \varepsilon_{SMD} (u)\rangle  \leq  \big[ C_1+C_2(\frac{C_s \delta}{L})^2 \,  \Rey \big] \, \frac{U^3}{L}.$$

\end{cor}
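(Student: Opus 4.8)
The plan is to repeat the proof of Corollary \ref{Cor2} essentially verbatim, specializing to $\alpha = 1$ and carefully retaining the factor $\gamma^{\alpha-2}$, which in this case no longer reduces to a quantity bounded by $1$. Since every ingredient (Theorem \ref{thm1}, the symmetry of $\beta_d$, and the binomial computation leading to (3.20)--(3.21)) was established for general $\alpha \geq 0$, the only real task is to track how the exponent of $\gamma$ alters the Reynolds-number dependence.

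First I would exploit the symmetry of $\beta_d$ about $z = L/2$ to write $\int_{L-\gamma L}^{L}\beta_d(z)\,dz = \int_{0}^{\gamma L}(\frac{z}{L})(1-\frac{z}{L})\,dz$, exactly as in Corollary \ref{Cor2}. For $\alpha = 1$ this integral is elementary: $\int_{0}^{\gamma L}\big(\frac{z}{L}-\frac{z^2}{L^2}\big)\,dz = \frac{1}{2}L\gamma^2 - \frac{1}{3}L\gamma^3 \leq \frac{1}{2}L\gamma^2$, which is the $\alpha = 1$ instance of the bound (3.21) with $C_\alpha = \frac{1}{2}$. This is the only computation that differs from the previous corollary, and it is immediate.

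Next I would insert this estimate into Theorem \ref{thm1}. The bracketed term there carries the factor $\Rey^3\cdot\frac{1}{L}\int_{L-\gamma L}^{L}\beta_d(z)\,dz \leq \frac{1}{2}\Rey^3\gamma^2$. Substituting the prescribed value $\gamma = \frac{1}{5.1}\Rey^{-1}$ gives $\Rey^3\gamma^2 = (\frac{1}{5.1})^2\Rey$, so the entire factor scales linearly in $\Rey$. Absorbing the fixed numerical constant $\frac{1}{2}(\frac{1}{5.1})^2$ into $C_2$ then yields $\langle\varepsilon_{SMD}(u)\rangle \leq \big[C_1 + C_2(\frac{C_s\delta}{L})^2\,\Rey\big]\frac{U^3}{L}$, as claimed.

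I do not anticipate a genuine obstacle here; the corollary is really a bookkeeping observation, and one could even present it as a one-line specialization of the chain of inequalities in the proof of Corollary \ref{Cor2}. The only point worth emphasizing is the contrast with the case $\alpha \geq 2$: there the exponent $\gamma^{\alpha-2}$ satisfied $\gamma^{\alpha-2}\leq 1$ because $\gamma < 1$, whereas for $\alpha = 1$ one has $\gamma^{\alpha-2} = \gamma^{-1} = 5.1\,\Rey$. It is precisely this single surplus negative power of $\gamma$ that fails to cancel the $\Rey^3$ factor arising from $|\grad\Phi|^3$ in (3.18), leaving behind the extra $\Rey$ that signals over-dissipation at high Reynolds number and marks $\alpha = 2$ as the threshold exponent.
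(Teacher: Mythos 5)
Your proof is correct and follows essentially the same route as the paper: the paper likewise reduces the claim to replacing the bound (3.21) by $\int_{L-\gamma L}^{L}\beta_d(z)\,dz \leq C\,L\,\Rey^{-2}$ for $\alpha=1$ and then substituting into Theorem \ref{thm1}. Your explicit computation $\int_{0}^{\gamma L}\frac{z}{L}(1-\frac{z}{L})\,dz \leq \frac{1}{2}L\gamma^{2}$ with $\gamma=\frac{1}{5.1}\Rey^{-1}$ is exactly the omitted calculation, and your closing remark correctly identifies the uncancelled power of $\gamma$ as the source of the residual $\Rey$ factor.
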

\begin{proof}
The proof follows that of $\alpha \geq 2$ in the Corollary 3.4  except the inequality (3.21) is modified to 
 $$\int_{L-\gamma L}^{L} \beta_d(z)\, dz \leq  C\, L\,  (\Rey)^{-2},
  $$
  for $\alpha=1.$
\end{proof}

\section{Conclusion}
The key parameter is  $\alpha=$ the order of contact of the damping function at the wall. Comparing $\langle  \varepsilon_{SMD} (u)\rangle  \simeq \frac{U^3}{L}$ for $\alpha\geq 2$ in Corollary \ref{Cor2} with $\langle  \varepsilon_{SMD} (u)\rangle \simeq \big[ C_1+C_2(\frac{C_s \delta}{L})^2 \,  \Rey \big] \, \frac{U^3}{L} $ for $\alpha=1$ in Corollary \ref{Cor3} suggests that the model over dissipates flows for $\alpha=1$. If the upper bounds are sharp (an open problem), the accurate simulation would need $\alpha \geq 2$. The next logical step is to study $\langle  \varepsilon_{SMD} (u^h)\rangle$ after discretization by fixed mesh $h$ and extend the results in this paper, specially when the mesh does not resolve the boundary layers.
\medskip

\end{document}